\def\R{\mathbb{R}}
\numberwithin{equation}{section}
\newtheorem{thm}{Theorem}
\newtheorem{prop}{Proposition}
\theoremstyle{remark}
\theoremstyle{definition}
\newcommand{\E}[1]{\operatorname{E}\!\left[#1\right]}
\newcommand{\tr}[1]{\operatorname{tr}\!\left(#1\right)}
\newcommand{\Tr}{\text{tr}}
\newcommand{\rk}[1]{\operatorname{rk}#1}
 \newcommand{\diag}{\mathop{\rm diag}}
\begin{document}

\title{Improved Second Order Estimation in the
	Singular Multivariate Normal Model}

\author[hec]{Didier Ch\'etelat\fnref{thanks}}
\address[hec]{Department of Decision Sciences, HEC Montr\'eal}
\ead{didier.chetelat@hec.ca}

\author[cornell]{Martin T. Wells\fnref{thanks}\corref{main}}
\address[cornell]{Department of Statistical Science, Cornell University}
\ead{mtw1@cornell.edu}

\cortext[main]{Corresponding author}

\fntext[thanks]{This research was partially supported by NSF grants DMS-1208488 and CCF-0808864 and NIH U19 AI111143.}

\begin{abstract}
We consider the problem of estimating covariance and precision matrices, and their associated discriminant coefficients, from normal data when the rank of the covariance matrix is strictly smaller than its dimension and the available sample size. Using unbiased risk estimation, we construct novel estimators by minimizing upper bounds on the difference in risk over several classes. Our proposal estimates are empirically demonstrated to offer substantial improvement over classical approaches.
\end{abstract}

\begin{keyword}
	Covariance matrix, precision matrix, discriminant function, LDA, unbiased risk estimator, Moore-Penrose inverse, singular normal, singular Wishart.
	
	\MSC Primary 62C15; secondary 62F10, 62H12.
\end{keyword}

\maketitle


\section{Introduction}

With the recent explosion of high throughput data, much interest has arisen in applications where the number of feature parameters is greater than the sample size. In this situation, it is typically assumed that, despite their number, the underlying components are linearly independent, or in other words that their covariance matrix has full rank. However, little attention has been given to the situation where there is dependence between the components, that is, where the covariance matrix would be singular.

Recently, \cite{TsukumaKubokawa14} investigated the problem of estimating the mean vector of a multivariate normal distribution when the unknown covariance matrix is singular. By deriving an unbiased risk estimator for the quadratic loss, they were able to give sufficient conditions for an estimator to dominate the maximum likelihood estimator.

This article is concerned with the same model as \cite{TsukumaKubokawa14}, but we consider three different estimation problems. Unlike the mean estimation problem, all three estimation scenarios depend the second order moment of the distribution. In each case we provide decision-theoretic results that lead to improved inference. The first task is the estimation of the singular covariance matrix itself, under an invariant squared loss. This problem was first considered in the full rank case by \cite{Haff80}, and in the high-dimensional setting by \cite{Konno09}. The second concern is the estimation of the Moore-Penrose pseudo-inverse of the covariance matrix, also known as the precision matrix, under the Frobenius loss. This problem was first considered in the full rank case by \cite{Haff77,Haff79b} and in the high-dimensional setting by \cite{KubokawaSrivastava08}.

Finally, we consider the problem of estimating the discriminant coefficient that arise in Linear Discriminant Analysis (LDA) under the squared loss, a problem first considered in the full rank case by \cite{Haff86} and \cite{DeySrinivasan91}. LDA is a standard method for classification when the number
of observations $n$ is much larger than the number of features $p$. If data follows $p$-variate
normal distribution with the same covariance structure across the groups, it provides an
asymptotically optimal classification rule, meaning that its misclassification error converges
to Bayes risk. However, it was noted by \cite{dudoit2002comparison} that a naive
implementation of LDA for high-dimensional data provides poor classification results in
comparison to alternative methods. A rigorous proof of this phenomenon in the case $p\gg n$
is given by \cite{bickel2004}. There are two main reasons for this. First, standard LDA
uses the sample covariance matrix to estimate the covariance structure, and in high dimensional
settings this results in a singular estimator. Secondly, by using all $p$ features in classification,
interpretation of the results becomes challenging.  One of the popular approaches to deal with the singularity is to use the independence rule which overcomes the singularity problem of the sample covariance but ignores the dependency structure. This approach is very appealing because of its simplicity and was encouraged by the work of \cite{bickel2004}, who showed it performs better than the standard LDA in a $p \gg n$ setting when the population matrix is full rank. Unfortunately, independence is only an approximation and it is unrealistic in most applications: for instance, in a genomic context, gene interactions and low dimensional network structure are crucial for the understanding of biological processes. In this situation, one should aim for better estimators of the covariance matrix rather than relying on an independence structure that assumes a full rank population covariance matrix.  Indeed, we will see in Section \ref{sec:simulations} that using the diagonal of the sample covariance matrix is a poor strategy if the true covariance matrix is rank deficient.

The presentation of our approach to these three estimation problems is divided as follows. The decision-theoretic results are described in Section \ref{sec:results}. For each of the three problems, we construct an appropriate unbiased estimator of the risk (URE) using Stein's and Haff's lemmas \citep{Stein86,Haff79a, TsukumaKubokawa14}. We then consider the class of estimator given by constant multiples of a naive estimator, and minimize an upper bound on the difference in risk to obtain estimators that dominate the naive estimator. Finally, we consider a larger class given by the sum of this estimator and an appropriate trace, and again minimize an upper bound on the risk to obtain a dominating estimator.

In Section \ref{sec:simulations}, we investigate the amount of improvement provided by the proposed estimators through numerical study. Finally, proofs of the statements of Section \ref{sec:results} are provided in Section \ref{sec:proofs}.

\section{Estimation Results}\label{sec:results}

\subsection{Model}\label{subsec:model}

Our setting is similar to the one used in \cite{TsukumaKubokawa14}. We observe an $n$-sample $X_1,...,X_n$ identically and independently distributed from a $p$-dimensional multivariate normal distribution $N_p(\mu,\Sigma)$, where $\mu$ and $\Sigma$ are unknown. However, the $p$-dimensional covariance matrix $\Sigma$ is rank-deficient with respect to the dimension and the sample size, in the sense that
\begin{align}
r=\rk{\Sigma}<\min(n,p).
\label{eq:singular}
\end{align}
The resulting singular multivariate normal distribution does not have a density with respect to the Lebesgue measure on $\R^p$, but lives in the $r$-dimensional linear subspace spanned by the columns of $\Sigma$. More details can be found, for example, in \citet[Section 2.1]{SrivastavaKhatri79}.

Define the $n\times p$ data matrix $X=(X_1,...,X_p)^t$. The sample covariance matrix $S=(X-1_n\bar X^t)^t(X-1_n\bar X^t)/n$ then follows a Wishart distribution $W_p(n-1,\Sigma/n)$ with $n-1$ degrees of freedom. Since $\Sigma$ is rank-deficient, it is singular in the terminology of \citet[Section 3.1]{SrivastavaKhatri79}. We warn the reader that the expression ``singular Wishart'' has also been used in the literature to describe the different situation where the covariance is positive-definite and the dimension exceeds the degrees of freedom, as in \citet{Srivastava03}. Let $S=O_1LO_1^t$ denote the reduced spectral decomposition of $S$, where $L=\text{diag}(l_1,...,l_r)$ denote the $r$ non-zero eigenvalues and $O_1$ is $p\times r$ semi-orthogonal.

In this situation, neither $S$ nor $\Sigma$ are invertible. Since inverses of covariance matrix are of considerable interest in multivariate statistical analysis, some generalized inverse of these quantities is desirable. In this article, we will focus on the Moore-Penrose pseudoinverse, which will be denoted $A^+$ for a matrix $A$. Definitions and theoretical properties can be found in \citet[Chapter 20]{Harville97}.

The singular multivariate normal model is amenable to decision-theoretic analysis through a key insight of \citet[Section 2.2]{TsukumaKubokawa14}. The authors proved that when \eqref{eq:singular} holds, the subspace spanned by the sample covariance matrix is almost surely constant and matches the subspace spanned the true covariance matrix, in the sense that the remarkable identity holds
\begin{align}
SS^+=\Sigma\Sigma^+.
\label{eq:SSSS}
\end{align}
This fact will be repeatedly used in the Section \ref{sec:proofs} proofs and is essential to our derivations.

Let us now turn our attention to the three problems we wish to solve. In terms of the notation introduced above, these are:
\vspace{5pt}
\begin{itemize}
\setlength\itemsep{5pt}
\setlength{\itemindent}{-30pt}
\item[] \textit{Covariance matrix estimation.} The estimation of $\Sigma$ under the invariant squared loss $L(\hat\Sigma,\Sigma)=\Tr[(\hat{\Sigma}\Sigma^+-I_p)^2]$.
\item[] \textit{Precision matrix estimation.} The estimation of $\Sigma^+$ under the Frobenius loss $L(\hat\Sigma^+,\Sigma^+)=\|\hat\Sigma^+-\Sigma^+\|_F^2$.
\item[] \textit{Discriminant coefficient estimation.} The estimation of $\eta=\Sigma^+\mu$ under the square loss $L(\hat\eta,\eta)=\|\hat\eta-\eta\|_2^2$.
\end{itemize}
\vspace{5pt}
The traditional estimators for $\mu$ and $\Sigma$ are the sample mean and covariance $(\bar X, S)$, which suggests the corresponding naive estimators $S$, $S^+$ and $S^+\bar X$ for each respective problem. In the next three subsections we will see traditional estimators are not admissible and improved estimators will be developed.

\subsection{Covariance matrix estimation}\label{subsec:cov}

The standard estimator for a covariance matrix is the sample covariance matrix $S$. An alternative is the unbiased estimator $\frac{n}{n-1}S$, which corrects for the loss in degrees of freedom from not knowing $\mu$. We will look for estimators that improve over these benchmarks and study their performance.

We first show that an unbiased estimator of the risk holds for orthogonally invariant estimators, that is, estimators of the form $\hat\Sigma=O_1\Psi O_1^t$ with $\Psi=\text{diag}(\psi_1,...,\psi_r)$ twice-differentiable functions of $L=\text{diag}(l_1,...,l_r)$.

\begin{thm}[Unbiased risk estimation for singular covariance matrices]\label{thm:cov-ure} Let $1\leq r\leq n-1$ and define \[\psi^*_k=\left[\frac{n-r-2}n\frac{\psi_k}{l_k}+\frac4n\frac{\partial \psi_k}{\partial l_k}+\frac2n\sum_{b\neq k}^r\frac{\psi_k-\psi_b}{l_k-l_b}-2\right]\psi_k.\]
Assume the regularity conditions
{\setlength{\mathindent}{5pt}\begin{align}&
\E{\left|p
+\sum_{k=1}^r\frac{n-r-2}n\frac{\psi_k^*}{l_k}
+\frac2n\sum_{k=1}^r\frac{\partial \psi_k^*}{\partial l_k}
+\frac1n\sum_{k\neq b}^r\frac{\psi_k^*-\psi_b^*}{l_k-l_b}
\right|}<\infty,
\notag\\&
\E{\left|p+\sum_{k=1}^r \frac{n-r-2}n\frac{\psi_k}{l_k}
+\frac2n\sum_{k=1}^r\frac{\partial \psi_k}{\partial l_k}
+\frac1n\sum_{k\neq b}^r\frac{\psi_k-\psi_b}{l_k-l_b}\right|}<\infty,
\notag\\&\hspace{20pt}
\E{\sum_{k=1}^r\left|\frac{\psi^*_k}{l_k}\right|^2}<\infty
\text{ and }
\E{\sum_{k=1}^r\left|\frac{\psi_k}{l_k}\right|^2}<\infty.
\label{eq:cov-ure-conditions}
\end{align}}
We then have
{\setlength{\mathindent}{5pt}\begin{align}&
\E{\Tr\Big(\big[\hat\Sigma\Sigma^+-I_p\big]^2\Big)}
\notag\\&\qquad
=\E{p
+\frac{n-r-2}{n}\sum_{k=1}^r\frac{\psi^*_k}{l_k}
+\frac{2}{n}\sum_{k=1}^r\frac{\partial\psi^*_k}{\partial l_k}
+\frac1{n}\sum_{k\not= b}^r\frac{\psi^*_k-\psi^*_b}{l_k-l_b}
}.
\label{eq:cov-ure-ure}
\end{align}}
\end{thm}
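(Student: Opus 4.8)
The plan is to compute $\E{\Tr([\hat\Sigma\Sigma^+-I_p]^2)}$ directly by expanding the square and applying a Stein--Haff-type integration by parts identity for the singular Wishart distribution. First I would expand
\[
\Tr\big([\hat\Sigma\Sigma^+-I_p]^2\big)
=\Tr\big(\hat\Sigma\Sigma^+\hat\Sigma\Sigma^+\big)
-2\Tr\big(\hat\Sigma\Sigma^+\big)+\Tr\big(\Sigma\Sigma^+\big),
\]
where the last term equals $r$ deterministically. The main work is then to take expectations of the first two terms and re-express them in terms of $S$, $L$, $O_1$ and derivatives of $\Psi$ only, i.e. to eliminate $\Sigma^+$. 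This is where the key identity \eqref{eq:SSSS}, $SS^+=\Sigma\Sigma^+$, and the Haff/Stein-type lemma for the singular Wishart (the analogue of the integration by parts used in \cite{TsukumaKubokawa14}) come in: they let us replace an expression of the form $\E{\Tr(G(S)\Sigma^+)}$ by an expectation of $\E{\Tr(\mathcal{D}G(S))}$ plus lower-order terms, where $\mathcal{D}$ is the appropriate matrix differential operator on the Wishart argument. Since $\Sigma\sim W$ enters only through $\Sigma^+$ after using \eqref{eq:SSSS} to handle $\Sigma\Sigma^+$, both cross terms become amenable to this device.

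The second step is to pass from the matrix differential operator acting on $\hat\Sigma=O_1\Psi O_1^t$ to the explicit eigenvalue expressions in the statement. Writing everything in the spectral coordinates $(L,O_1)$, the derivative of an orthogonally invariant matrix function produces exactly the three familiar pieces: a term $\sum_k \partial\psi_k/\partial l_k$ from differentiating the eigenvalues, a term $\sum_{k\neq b}(\psi_k-\psi_b)/(l_k-l_b)$ from differentiating the eigenvectors $O_1$, and a ``dimension'' term with coefficient $(n-r-2)/n$ coming from the singular Wishart density's dependence on $L$ (here the exponent involves $n-1$ degrees of freedom on an $r$-dimensional space, which is what produces $n-r-2$ rather than the usual $n-p-2$). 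Carrying this out for $\Tr(\hat\Sigma\Sigma^+)$ yields $\E{p+\frac{n-r-2}{n}\sum_k\frac{\psi_k}{l_k}+\frac2n\sum_k\frac{\partial\psi_k}{\partial l_k}+\frac1n\sum_{k\neq b}\frac{\psi_k-\psi_b}{l_k-l_b}}$, so that $-2\Tr(\hat\Sigma\Sigma^+)$ contributes the ``$-2$'' inside the bracket defining $\psi_k^*$.

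The third step is the quadratic term $\Tr(\hat\Sigma\Sigma^+\hat\Sigma\Sigma^+)$. The trick is to apply the same integration by parts a second time: treat one factor $\hat\Sigma\Sigma^+$ as the ``test function'' $G(S)=\hat\Sigma\Sigma^+\hat\Sigma$ contracted against the remaining $\Sigma^+$, and reduce $\E{\Tr(G(S)\Sigma^+)}$ again. After the dust settles, the net effect is that the quantity $\psi_k$ is replaced throughout by the transformed quantity $\psi_k^* / \psi_k \cdot \psi_k$—more precisely, the bracketed operator
\[
\tfrac{n-r-2}{n}\tfrac{\psi_k}{l_k}+\tfrac4n\tfrac{\partial\psi_k}{\partial l_k}+\tfrac2n\sum_{b\neq k}\tfrac{\psi_k-\psi_b}{l_k-l_b}-2
\]
acting on $\psi_k$ is exactly what emerges, which is the definition of $\psi_k^*$. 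Substituting $\psi_k\mapsto\psi_k^*$ into the formula already obtained for the linear term then gives the right-hand side of \eqref{eq:cov-ure-ure}. The regularity conditions \eqref{eq:cov-ure-conditions} are exactly what is needed to justify Fubini and the vanishing of the boundary terms in each of the two integration by parts, so I would invoke them at the two places where the Haff/Stein lemma is applied.

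The step I expect to be the main obstacle is the second integration by parts for the quadratic term: keeping careful track of which matrix the differential operator hits, handling the non-commutativity of $\hat\Sigma$ and $\Sigma^+$ inside the trace, and correctly accounting for the eigenvector-perturbation terms (the $\sum_{b\neq k}$ sums) so that the coefficient $4/n$ and the doubled off-diagonal sum in $\psi_k^*$ come out right. Verifying that all the cross-contributions collapse precisely into the single substitution $\psi_k\mapsto\psi_k^*$, rather than leaving residual terms, is the delicate bookkeeping at the heart of the proof.
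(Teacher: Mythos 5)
Your overall strategy---expand the square, eliminate $\Sigma^+$ by two applications of a Stein--Haff identity, and recognize that the net effect is the substitution $\psi_k\mapsto\psi^*_k$ in the linear-term formula---is the right mechanism, and it is essentially what underlies the lemmas the paper invokes. But there is one concrete error in your first step: in the expansion of $\Tr\big([\hat\Sigma\Sigma^+-I_p]^2\big)$ the constant term is $\Tr(I_p)=p$, not $\Tr(\Sigma\Sigma^+)=r$. As written, your decomposition would produce a final constant of $r$ rather than the $p$ appearing in \eqref{eq:cov-ure-ure}, i.e.\ an answer off by $p-r$. (The paper keeps track of this by writing $p=(p-r)+r$ and carrying the $p-r$ outside a reduced $r$-dimensional loss.)

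On the route itself: the paper does not integrate by parts against the singular Wishart directly. It factors $\Sigma=H\Omega H^t$ with $H$ semi-orthogonal and $\Omega$ invertible $r\times r$, writes $S=HTH^t$ with $T\sim W_r(n-1,\Omega/n)$ of \emph{full} rank (since $r\le n-1$), and uses $O_1O_1^t=\Sigma\Sigma^+=HH^t$ to reduce the loss to $(p-r)+\Tr\big([U\Psi U^t\Omega^{-1}-I_r]^2\big)$ with $U=H^tO_1$ orthogonal; it then quotes the classical nonsingular $r$-dimensional URE (Lemmas 1--2 of Ch\'etelat--Wells, cf.\ Sheena's Theorem 4.1) rather than re-deriving it. The coefficient $n-r-2$ thus comes for free from the full-rank $W_r(n-1,\cdot)$ identity, not from an analysis of the singular density as you suggest. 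Your direct singular-Wishart approach can be made to work (it is the Tsukuma--Kubokawa/Konno route), but you would then have to actually establish the singular Haff identity and carry out the second, quadratic-term integration by parts---exactly the bookkeeping you flag as delicate---whereas the reduction makes that step an off-the-shelf citation. One further small point: the regularity conditions the two integrations by parts actually require involve $\psi^*_k+2\psi_k$ rather than $\psi^*_k$ alone (the $-2\psi_k$ from the cross term is folded into the function being differentiated), so you need the short additional argument, present in the paper, that \eqref{eq:cov-ure-conditions} implies those conditions via the triangle inequality.
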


Let us now consider estimators that are proportional to the sample covariance matrix, that is, of the form $aS$ for $a$ constant. The following result provides the optimal proportionality factor.

\begin{prop}\label{prop:cov-aS} Let $1\leq r\leq n-1$.  The optimal estimator of $\Sigma$ of the form $aS$ for $a\in\R$ a deterministic constant is
$\hat{\Sigma}_\text{HF1}=\frac{n}{n+r}S$, with risk
\[\E{\Tr\Big(\big[\hat\Sigma_\text{HF1}\Sigma^+-I_p\big]^2\Big)}
=p-\frac{(n-1)r}{n+r}.\]
In particular $\hat{\Sigma}_\text{HF1}$ dominates $S$, which itself dominates $\frac{n}{n-1}S$.
\end{prop}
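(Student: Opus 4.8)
\emph{Proof plan.} The plan is to specialize Theorem~\ref{thm:cov-ure} to the one-parameter family $\hat\Sigma = aS$ and then optimize over the scalar $a$. Writing $S = O_1 L O_1^t$, the estimator $aS$ is orthogonally invariant with $\psi_k = a l_k$; since then $\partial\psi_k/\partial l_k = a$ and $(\psi_k - \psi_b)/(l_k - l_b) = a$ for all $k \ne b$, I would first check that the bracket defining $\psi_k^*$ collapses to the $l$-free constant $\frac{a}{n}\big[(n-r-2) + 4 + 2(r-1)\big] - 2 = \frac{(n+r)a}{n} - 2$, so that $\psi_k^* = \big[\frac{(n+r)a}{n} - 2\big]\,a\,l_k$. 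Because both $\psi_k/l_k = a$ and $\psi_k^*/l_k$ are deterministic constants, the four expectations in \eqref{eq:cov-ure-conditions} are of fixed finite quantities, so the regularity conditions hold automatically and Theorem~\ref{thm:cov-ure} delivers the exact risk of $aS$.

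Next I would substitute into the URE \eqref{eq:cov-ure-ure}. With $C = \big[\frac{(n+r)a}{n} - 2\big]a$, each of $\psi_k^*/l_k$, $\partial\psi_k^*/\partial l_k$ and $(\psi_k^* - \psi_b^*)/(l_k - l_b)$ equals $C$, so, counting $r$ terms in each single sum and $r(r-1)$ ordered pairs in the double sum and using $(n-r-2)r + 2r + r(r-1) = r(n-1)$, the right-hand side of \eqref{eq:cov-ure-ure} becomes
\[
R(a) := \E{\Tr\Big(\big[\hat\Sigma\Sigma^+ - I_p\big]^2\Big)} = p + \frac{r(n-1)}{n}\Big(\frac{n+r}{n}\,a^2 - 2a\Big).
\]
Since $1 \le r \le n-1$ forces $n \ge 2$, the $a^2$-coefficient $\frac{r(n-1)(n+r)}{n^2}$ is strictly positive, so $R$ is a strictly convex quadratic; solving $R'(a) = 0$ gives $a^* = \frac{n}{n+r}$, and plugging back yields $R(a^*) = p - \frac{(n-1)r}{n+r}$, the claimed risk of $\hat\Sigma_\text{HF1}$.

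For the domination claims I would invoke the shape of $R$ alone: since $a^* = \frac{n}{n+r} < 1 < \frac{n}{n-1}$ and $R$ is increasing on $[a^*,\infty)$, one has $R\big(\tfrac{n}{n+r}\big) < R(1) < R\big(\tfrac{n}{n-1}\big)$, i.e. $\hat\Sigma_\text{HF1}$ dominates $S$ (the case $a=1$), which in turn dominates $\frac{n}{n-1}S$ (the case $a = \frac{n}{n-1}$); these estimators having constant risk, ``domination'' here just means a strictly smaller risk value. I do not expect a genuine obstacle: the argument is essentially mechanical, and the steps most likely to introduce errors are the term-counting in the single and double sums over the eigenvalues and the algebraic cancellations $(n-r-2)+4+2(r-1) = n+r$ and $(n-r-2)r+2r+r(r-1) = r(n-1)$.
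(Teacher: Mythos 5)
Your proposal is correct and follows essentially the same route as the paper: specialize Theorem~\ref{thm:cov-ure} to $\psi_k = a l_k$, verify the regularity conditions trivially since all quantities are deterministic, obtain the exact quadratic risk $p - 2\tfrac{(n-1)r}{n}a + \tfrac{(n-1)(n+r)r}{n^2}a^2$, and minimize at $a = \tfrac{n}{n+r}$. The only (harmless) difference is that you deduce the domination chain from convexity and the ordering $\tfrac{n}{n+r} < 1 < \tfrac{n}{n-1}$, whereas the paper evaluates the three risks explicitly; both yield the same conclusion.
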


Thus $\frac{n}{n-1}S$ and $S$ are inadmissible. We can further extend this result by considering a larger class of estimators of the form $\frac{n}{n+r}\left[S+tSS^+\,\Tr^{-1}(S^+)\right]$ for $t$ constant. Estimators of this shape were first considered by \citet{Haff80}. Although computing the exact risk of these estimators is difficult, it is possible to bound the difference in risk with the one of $\hat\Sigma_\text{HF1}$ as follows.

\begin{prop}\label{prop:cov-aS+tTr} Let $1\leq r\leq n-4$. Then the risk of estimators of the form $\hat\Sigma_t=\frac{n}{n+r}\left[S+tSS^+\,\Tr^{-1}(S^+)\right]$ for $t\in\R$ can be bounded by
{\setlength{\mathindent}{5pt}\begin{align}&
\E{\Tr\Big(\big[\hat\Sigma_t\Sigma^+-I_p\big]^2\Big)}
\leq\;
\E{\Tr\Big(\big[\hat\Sigma_\text{HF1}\Sigma^+-I_p\big]^2\Big)}
\notag\\&\hspace{40pt}
+\bigg[
\frac{(n-r)(n-r+2)}{(n+r)^2}t^2
-2\frac{(n-r)(r-1)}{(n+r)^2}t
\bigg]\E{\frac{\Tr(S^{+2})}{\text{tr}^2(S^+)}}.
\label{eq:cov-aS+tTr-risk}
\end{align}}
The constant that minimizes this upper bound is $t=\frac{r-1}{n-r+2}$. When $r>1$, the estimator $\hat{\Sigma}_\text{HF2} =\frac{n}{n+r}\left[S+\frac{r-1}{n-r+2}SS^+\Tr^{-1}(S^+)\right]$ dominates $\hat{\Sigma}_\text{HF1}$.
\end{prop}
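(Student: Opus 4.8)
The plan is to reduce everything to the unbiased risk estimator of Theorem~\ref{thm:cov-ure}. For the estimator $\hat\Sigma_t=\frac{n}{n+r}[S+t\,SS^+\,\Tr^{-1}(S^+)]$ the corresponding $\Psi$ is orthogonally invariant with $\psi_k=\frac{n}{n+r}\big[l_k+t/\sum_{j=1}^r l_j^{-1}\big]$, so I would first compute $\psi_k^*$ and then the four sums appearing in \eqref{eq:cov-ure-ure}. The term $t=0$ reproduces $\hat\Sigma_\text{HF1}$, whose exact risk $p-\frac{(n-1)r}{n+r}$ is already known from Proposition~\ref{prop:cov-aS}; so it suffices to track only the $t$-dependent part of the URE. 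Writing $e_{-1}=\Tr(S^+)=\sum_j l_j^{-1}$ and $e_{-2}=\Tr(S^{+2})=\sum_j l_j^{-2}$, the derivative $\partial/\partial l_k$ of $1/e_{-1}$ is $l_k^{-2}/e_{-1}^2$, and the divided differences $\frac{\psi_k-\psi_b}{l_k-l_b}$ split into a clean "$l_k$" part (already handled in HF1) plus a constant-in-$k,b$ part that contributes zero to $\sum_{k\neq b}$ by antisymmetry. Carrying this through, the $t$-dependent contribution to $\E{\Tr([\hat\Sigma_t\Sigma^+-I_p]^2)}$ should collapse to an expression of the form $\alpha t^2\,\E{e_{-2}/e_{-1}^2}+\beta t\,\E{(\text{something})}$.

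The main obstacle is the middle cross term, i.e.\ the coefficient of $t^1$. A direct expansion produces, besides the promised $-2\frac{(n-r)(r-1)}{(n+r)^2}\E{e_{-2}/e_{-1}^2}$, additional terms involving $\E{\Tr(S^+)/(l_k e_{-1}^2)}$-type quantities and the divided-difference sum $\frac1n\sum_{k\neq b}\frac{\psi^*_k-\psi^*_b}{l_k-l_b}$ evaluated on the cross terms, which are not obviously of the stated shape. This is precisely why the statement is an \emph{inequality}: I expect that after collecting these leftover $t$-linear pieces one shows they are bounded above by $0$ (or absorbed into the $e_{-2}/e_{-1}^2$ term), using elementary inequalities such as $l_k^{-1}\leq e_{-1}$, Cauchy--Schwarz on $\sum l_j^{-1}\cdot\sum l_j^{-3}\geq (\sum l_j^{-2})^2$, and the sign of the divided differences together with $r\geq 1$. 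The condition $r\leq n-4$ enters here to keep the coefficients $n-r$, $n-r+2$ (and factors like $n-r-2$ from $\psi_k^*$) positive so these bounds go the right way; one also checks the regularity conditions \eqref{eq:cov-ure-conditions} hold because $\psi_k,\psi_k^*$ are smooth in $L$ and the requisite moments of $l_k^{-1}$ are finite for $r\leq n-4$ (integrability of inverse Wishart moments).

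Granting \eqref{eq:cov-aS+tTr-risk}, the final claim is immediate calculus: the right-hand side is $\E{\Tr([\hat\Sigma_\text{HF1}\Sigma^+-I_p]^2)}+\big[At^2-Bt\big]\E{e_{-2}/e_{-1}^2}$ with $A=\frac{(n-r)(n-r+2)}{(n+r)^2}>0$ and $B=2\frac{(n-r)(r-1)}{(n+r)^2}$, a convex quadratic in $t$ minimized at $t^*=B/(2A)=\frac{r-1}{n-r+2}$, giving $\hat\Sigma_\text{HF2}$. At $t=t^*$ the bracket equals $-\frac{(r-1)^2}{(n-r+2)^2}\cdot\frac{(n-r)(n-r+2)}{(n+r)^2}=-\frac{(n-r)(r-1)^2}{(n+r)^2(n-r+2)}<0$ whenever $r>1$, and since $\E{e_{-2}/e_{-1}^2}>0$ this shows $\hat\Sigma_\text{HF2}$ strictly dominates $\hat\Sigma_\text{HF1}$, completing the proof.
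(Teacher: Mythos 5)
Your overall strategy is exactly the paper's: plug $\psi_k=\frac{n}{n+r}[l_k+t/\mathrm{tr}(S^+)]$ into the unbiased risk estimator of Theorem~\ref{thm:cov-ure}, isolate the $t$-dependent part, bound it, and then minimize a quadratic in $t$. Your closing calculus step (minimizer $t^*=\frac{r-1}{n-r+2}$, strictly negative bracket when $r>1$) is correct and matches the paper. But the heart of the proposition is the specific inequality \eqref{eq:cov-aS+tTr-risk} with the exact coefficients $\frac{(n-r)(n-r+2)}{(n+r)^2}$ and $-2\frac{(n-r)(r-1)}{(n+r)^2}$, and this is precisely the part you defer with ``I expect that\ldots''. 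Without carrying out that computation you cannot know the optimal $t$, nor that the bracket is negative there, so as written the argument has a genuine gap at its center. For the record, the paper closes it as follows: after computing $\psi^*_k$ explicitly (it contains terms in $l_k^{-1}\mathrm{tr}^{-2}(S^+)$ and $l_k^{-2}\mathrm{tr}^{-3}(S^+)$), the divided-difference sum is controlled by $\sum_{k\neq b}\frac{l_k^{-j}-l_b^{-j}}{l_k-l_b}\leq 0$ for $j=1,2$; the higher-order traces that appear are absorbed via $\Tr(S^{+3})\leq\Tr(S^{+2})\Tr(S^+)$ and $\Tr(S^{+4})\leq\Tr(S^{+2})\,\mathrm{tr}^2(S^+)$; and the leftover constant in the $t$-coefficient is folded into the ratio using $\Tr(S^{+2})\leq\mathrm{tr}^2(S^+)$, with $r\leq n-4$ ensuring the resulting coefficients combine to the stated form.

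Two smaller points. First, your remark that the divided differences reduce to a constant ``by antisymmetry'' is true for $\Psi$ but not for $\Psi^*$, and it is $\Psi^*$ that enters the URE \eqref{eq:cov-ure-ure}; you do later flag the $\Psi^*$ divided differences as the obstacle, but the two should not be conflated. Second, the Cauchy--Schwarz inequality you propose, $\sum l_j^{-1}\cdot\sum l_j^{-3}\geq(\sum l_j^{-2})^2$, bounds $\Tr(S^{+3})$ from \emph{below} and so points the wrong way for showing $\Tr(S^{+3})/\mathrm{tr}^3(S^+)\leq\Tr(S^{+2})/\mathrm{tr}^2(S^+)$; the elementary bound $l_k^{-1}\leq\mathrm{tr}(S^+)$ that you also mention is the one that does the job.
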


Thus $\hat\Sigma_\text{HF1}$ is itself inadmissible for $r>1$. Although this result does not show $\hat\Sigma_\text{HF2}$ optimal within the class, the estimator is likely to have good overall risk properties.

\subsection{Precision matrix estimation}\label{subsec:pres}

A standard estimator for a singular precision matrix is the Moore-Penrose pseudoinverse of the sample covariance matrix $S^+$. Note that by \citet[Page 97, Equation (12)]{Muirhead82} we have
\[\E{S^+}=\frac{n}{n-r-2}\Sigma^+.\]
for $n-r-2>0$. Thus in this case an alternative could be the unbiased estimator $\frac{n-r-2}{n}S^+$. We will look for estimators that improve over these benchmarks and study their performance.

We first show that an unbiased estimator of the risk holds for orthogonally invariant estimators, that is, estimators of the form $\hat\Sigma^+=O_1\Psi O_1^t$ with $\Psi=\text{diag}(\psi_1,...,\psi_r)$ twice-differentiable functions of $L=\text{diag}(l_1,...,l_r)$.

\begin{thm}[Unbiased risk estimation for singular precision matrices]\label{thm:pres-ure} Let $1\leq r\leq n-1$.
Assume the regularity condition
{\setlength{\mathindent}{5pt}\begin{align*}&
\quad
\E{\bigg|
\frac{n-r-2}{n}\sum_{k=1}^p\frac{\psi_k}{l_k}
+\frac2n\sum_{k=1}^r\frac{\partial\psi_k}{\partial l_k}
+\frac1n\sum_{k\not=b}\frac{\psi_k-\psi_b}{l_k-l_b}\bigg|}<\infty.
\end{align*}}
Then
{\setlength{\mathindent}{0pt}\begin{align*}&
\E{\|\hat{\Sigma}^+-\Sigma^+\|_F^2}
\\&
=\E{
\sum_{k=1}^r\psi_k^2
-2\frac{n-r-2}n\sum_{k=1}^r\frac{\psi_k}{l_k}
-\frac4n\sum_{k=1}^r\frac{\partial\psi_k}{\partial l_k}
-\frac2n\sum_{k\not=b}^r\frac{\psi_k-\psi_b}{l_k-l_b}
}\hspace{-5pt}
+\tr{\Sigma^{-2}}\!.
\end{align*}}
\end{thm}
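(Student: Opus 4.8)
The plan is to expand the Frobenius loss into three pieces, dispose of the two straightforward ones using the semi-orthogonality of $O_1$, and reduce the whole theorem to the evaluation of a single cross-term expectation by a Stein--Haff integration-by-parts identity for the singular Wishart. Concretely, one writes
\[
\|\hat\Sigma^+-\Sigma^+\|_F^2
=\Tr\big((\hat\Sigma^+)^2\big)-2\,\Tr\big(\hat\Sigma^+\Sigma^+\big)+\Tr\big((\Sigma^+)^2\big).
\]
Since $\hat\Sigma^+=O_1\Psi O_1^t$ with $O_1^tO_1=I_r$, the first term collapses to $\Tr(\Psi^2)=\sum_{k=1}^r\psi_k^2$, and the third term is the deterministic constant $\tr{\Sigma^{-2}}$. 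Everything then comes down to proving
\[
\E{\Tr\big(O_1\Psi O_1^t\,\Sigma^+\big)}
=\E{\frac{n-r-2}{n}\sum_{k=1}^r\frac{\psi_k}{l_k}
+\frac2n\sum_{k=1}^r\frac{\partial\psi_k}{\partial l_k}
+\frac1n\sum_{k\neq b}^r\frac{\psi_k-\psi_b}{l_k-l_b}},
\]
after which multiplying by $-2$ and recombining yields the stated formula. This is also exactly where the regularity hypothesis is used: it guarantees that the right-hand integrand is absolutely integrable, so the formal integration by parts is a genuine equality of expectations.

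To establish the displayed identity I would regard $O_1\Psi O_1^t$ as a smooth orthogonally invariant matrix function $G(S)$ of the sample covariance and exploit \eqref{eq:SSSS}: since $O_1O_1^t=SS^+=\Sigma\Sigma^+$, the function $G(S)$ is supported on the common (a.s.\ fixed) range of $S$ and $\Sigma$, and $\Sigma^+$ acts there as the inverse of $\Sigma$. With $nS\sim W_p(n-1,\Sigma)$ of rank $r$, the singular-Wishart Stein--Haff lemma of \cite{TsukumaKubokawa14} (in the spirit of \cite{Haff79a,Stein86}) then expresses $\E{\Tr(\Sigma^+G(S))}$ as a combination of $\E{\Tr(S^+G(S))}$ and the trace of the matrix derivative of $G$, with the effective degrees-of-freedom constant entering as $n-r-2$ rather than the full-rank $n-p-1$ --- consistent with $\E{S^+}=\tfrac{n}{n-r-2}\Sigma^+$ --- and an overall factor $1/n$ coming from the $\Sigma/n$ scaling of $S$. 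Passing to the eigenvalue parametrization $G(S)=O_1\Psi(L)O_1^t$ and using the standard first-order perturbation formulas for eigenvalues and eigenprojections, one gets $\Tr(S^+G(S))=\sum_k\psi_k/l_k$, while the trace of the derivative splits into a diagonal part $\sum_k\partial\psi_k/\partial l_k$ and a repulsion part $\frac12\sum_{k\neq b}(\psi_k-\psi_b)/(l_k-l_b)$ over the $r$ nonzero eigenvalues; collecting constants gives the identity above.

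The step I expect to be the main obstacle is making this singular Stein--Haff argument fully rigorous: one must argue that only the $r$ nonzero eigenvalues and their eigenvectors contribute (the null directions of $S$ being a.s.\ fixed by \eqref{eq:SSSS}), that the eigenvalue-repulsion sum runs over those $r$ values alone, that the correct constant is indeed $n-r-2$, and that the differentiations remain valid despite the non-smoothness of the spectral map where eigenvalues coalesce. Once that identity is in hand the rest is bookkeeping, and the stated regularity condition is precisely the integrability needed to turn the formal calculation into the asserted equality of risks.
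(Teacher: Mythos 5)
Your decomposition and overall strategy match the paper's: both expand $\|\hat\Sigma^+-\Sigma^+\|_F^2$ into a $\sum_k\psi_k^2$ term, a constant $\tr{\Sigma^{+2}}$ term, and a cross term $-2\E{\tr{\Sigma^+\hat\Sigma^+}}$ that is evaluated by a Stein--Haff identity, with the regularity hypothesis used exactly where you say it is. The difference is in how the cross term is made rigorous. You propose to apply a singular-Wishart Stein--Haff lemma directly to $G(S)=O_1\Psi O_1^t$ and you correctly flag the resulting obstacle (justifying that only the $r$ nonzero eigenvalues contribute, that the repulsion sum runs over those alone, that the constant is $n-r-2$). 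The paper avoids this entirely via the construction in its preliminaries: writing $\Sigma=H\Omega H^t$ with $H$ semi-orthogonal, $S=HTH^t$ with $T=H^tSH\sim\text{W}_r(n-1,\Omega/n)$ full rank, and $U=H^tO_1$ orthogonal, the loss becomes $\|U\Psi U^t-\Omega^{-1}\|_F^2$ and the cross term $\E{\tr{\Omega^{-1}U\Psi U^t}}$ is a genuinely nonsingular $r\times r$ problem, to which an off-the-shelf full-rank Stein--Haff lemma (Lemma 3 of Ch\'etelat--Wells) applies; the constant $n-r-2=(n-1)-r-1$ is then just the usual full-rank constant for an $r\times r$ Wishart with $n-1$ degrees of freedom, and no perturbation analysis of the singular spectral map is needed. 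So your argument is sound in outline, but if you carry it out as written you still owe a proof of the singular Stein--Haff identity you invoke; the cleanest way to discharge that debt is precisely the $S=HTH^t$ reduction the paper sets up, which turns your ``main obstacle'' into a solved problem.
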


Let us now consider estimators that are proportional to the Moore-Penrose inverse of the sample covariance matrix, that is, of the form $aS^+$ for $a$ constant. The following optimality result holds over this class.

\begin{prop}\label{prop:pres-aS} Let $1\leq r\leq n-5$. The risk of estimators of the form $aS^+$ for $a\leq\frac{n-r-2}n$ can be bounded in terms of the risk of $\frac{n-r-2}{n}S^+$ by
{\setlength{\mathindent}{5pt}\begin{align}&
\E{\|aS^+-\Sigma^+\|_F^2}
\leq\;
\E{\Big\|\frac{n-r-2}nS^+-\Sigma^+\Big\|_F^2}
\notag\\&\hspace{120pt}
+\bigg(a-\frac{n-r-2}{n}\bigg)
\bigg(a-\frac{n-r-6}{n}\bigg)\E{\Tr(S^{+2})}.
\label{eq:pres-aS-risk}
\end{align}}
The constant that minimizes this upper bound is $a=\frac{n-r-4}{n}$, and the corresponding estimator $\hat{\Sigma}^+_\text{EM1} =\frac{n-r-4}{n}S^+$ dominates $\frac{n-r-2}{n}S^+$, which itself dominates $S^+$.
\end{prop}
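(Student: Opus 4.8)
The plan is to apply the unbiased risk estimator of Theorem~\ref{thm:pres-ure} to the one-parameter family $\hat\Sigma^+ = aS^+$, which is the orthogonally invariant estimator with $\psi_k = a/l_k$, $k = 1,\dots,r$, and then to optimize over $a$. The hypothesis $r \le n-5$ is exactly what makes the second-order moments $\E{\Tr(S^{+2})}$ and $\E{\Tr^2(S^+)}$ finite (these are moments of an inverse-Wishart-type matrix, finite when $n-1 > r+3$), so that the regularity condition of Theorem~\ref{thm:pres-ure} holds for this choice of $\psi$.

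Evaluating the four ingredients of the URE at $\psi_k = a/l_k$ is direct: $\sum_k \psi_k^2 = a^2\Tr(S^{+2})$, $\sum_k \psi_k/l_k = a\Tr(S^{+2})$, $\sum_k \partial\psi_k/\partial l_k = -a\Tr(S^{+2})$, and since $(\psi_k - \psi_b)/(l_k - l_b) = -a/(l_k l_b)$ one obtains $\sum_{k\ne b}(\psi_k-\psi_b)/(l_k-l_b) = -a\,[\Tr^2(S^+) - \Tr(S^{+2})]$, where $\Tr(S^+) = \sum_k 1/l_k$ and $\Tr(S^{+2}) = \sum_k 1/l_k^2$. Substituting into Theorem~\ref{thm:pres-ure} and collecting terms yields, for every constant $a$,
\[\E{\|aS^+ - \Sigma^+\|_F^2} = \E{\Big(a^2 - \tfrac{2(n-r-3)}{n}a\Big)\Tr(S^{+2}) + \tfrac{2a}{n}\Tr^2(S^+)} + \tr{\Sigma^{-2}}.\]
Subtracting the instance $a = \tfrac{n-r-2}{n}$ and factoring the two resulting polynomial coefficients in $a$ gives the exact difference
\begin{align*}
&\E{\|aS^+ - \Sigma^+\|_F^2} - \E{\|\tfrac{n-r-2}{n}S^+ - \Sigma^+\|_F^2} \\
&\qquad = \Big(a - \tfrac{n-r-2}{n}\Big)\Big(a - \tfrac{n-r-4}{n}\Big)\E{\Tr(S^{+2})} + \tfrac2n\Big(a - \tfrac{n-r-2}{n}\Big)\E{\Tr^2(S^+)}.
\end{align*}
For $a \le \tfrac{n-r-2}{n}$ the factor $\tfrac2n(a - \tfrac{n-r-2}{n})$ is nonpositive, so I would bound the last expectation below by $\E{\Tr(S^{+2})}$, using that $\Tr^2(S^+) = (\sum_k 1/l_k)^2 \ge \sum_k 1/l_k^2 = \Tr(S^{+2})$ because the cross terms are nonnegative; absorbing the extra $\tfrac2n\E{\Tr(S^{+2})}$ into the first factor then replaces $(a - \tfrac{n-r-4}{n})$ by $(a - \tfrac{n-r-6}{n})$, which is exactly \eqref{eq:pres-aS-risk}.

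It remains to optimize the bound and to compare with $S^+$. Minimizing $g(a) = (a - \tfrac{n-r-2}{n})(a - \tfrac{n-r-6}{n})$ over $a \le \tfrac{n-r-2}{n}$ is elementary: $g$ is an upward parabola with vertex at $a = \tfrac{n-r-4}{n}$, which lies in the admissible range, so its minimum there equals $-4/n^2$; since $r \ge 1$ forces $S^+ \ne 0$ almost surely, $\E{\Tr(S^{+2})} > 0$, and hence $\hat{\Sigma}^+_\text{EM1} = \tfrac{n-r-4}{n}S^+$ strictly dominates $\tfrac{n-r-2}{n}S^+$. For the remaining domination I would set $a = 1$ in the exact difference identity above (legitimate since it holds for all constants $a$): its coefficients become $(1 - \tfrac{n-r-2}{n})(1 - \tfrac{n-r-4}{n}) = \tfrac{(r+2)(r+4)}{n^2}$ and $\tfrac2n(1 - \tfrac{n-r-2}{n}) = \tfrac{2(r+2)}{n^2}$, both strictly positive, so $\E{\|S^+ - \Sigma^+\|_F^2} > \E{\|\tfrac{n-r-2}{n}S^+ - \Sigma^+\|_F^2}$. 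The one step requiring care is the treatment of the $\Tr^2(S^+)$ term: simply discarding it yields a valid but weaker bound, and recovering the sharp form \eqref{eq:pres-aS-risk} together with the clean optimum $a = \tfrac{n-r-4}{n}$ hinges on the inequality $\Tr^2(S^+) \ge \Tr(S^{+2})$ and on checking that it does not disturb the monotonicity exploited in the minimization.
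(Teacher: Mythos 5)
Your proposal is correct and follows essentially the same route as the paper: evaluate the unbiased risk estimator of Theorem \ref{thm:pres-ure} at $\psi_k=a/l_k$, absorb the nonpositive $\tfrac2n\big(a-\tfrac{n-r-2}{n}\big)\E{\Tr^2(S^+)}$ term via $\Tr^2(S^+)\geq\Tr(S^{+2})$ to obtain \eqref{eq:pres-aS-risk}, minimize the resulting quadratic at $a=\tfrac{n-r-4}{n}$, and use the exact identity (valid for all $a$, including $a=1$) to show $\tfrac{n-r-2}{n}S^+$ dominates $S^+$. The only difference is that the paper verifies the regularity condition slightly more explicitly, citing Theorem 2.4.14(viii) of Kollo and von Rosen for $\E{\Tr^2(T^{-1})}<\infty$ when $n-r-4>0$, which is the same finiteness claim you invoke.
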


Thus $\frac{n-r-2}{n}S^+$ and $S^+$ are inadmissible. Note that our bound on the risk only holds for $a\leq\frac{n-r-2}n$: presumably, estimators $aS^+$ with $a>\frac{n-r-2}n$ do not dominate $\frac{n-r-2}nS^+$, but we have not been able to prove this hypothesis.

In any case, we can further extend this result by considering a larger class of estimators of the form $\frac{n-r-4}n\left[S^+ +t\,SS^+\Tr^{-1}(S)\right]$ for $t$ constant. Estimators of this form were first considered by \cite{EfronMorris76}. It is possible to bound the difference in risk with the one of $\hat\Sigma^+_\text{EM1}$ as follows.

\begin{prop}\label{prop:pres-aS-tTr} Let $1\leq r\leq n-5$. The risk of estimators of the form $\hat\Sigma^+_t=\frac{n-r-4}n\left[S^++t\,SS^+\Tr^{-1}(S)\right]$ for $t\in\R$ can be bounded in terms of the risk of $\hat\Sigma^+_\text{EM1}=\frac{n-r-4}{n}S^+$ through
{\setlength{\mathindent}{5pt}\begin{align}&
\E{\|\hat\Sigma^+_t-\Sigma^+\|_F^2}
\leq\;
\E{\Big\|\hat\Sigma^+_\text{EM1}-\Sigma^+\Big\|_F^2}
\notag\\&\hspace{60pt}
+\frac{(n-r-4)r}{n^2}\bigg[
(n-r-4)t^2
-4(r-1)t
\bigg]
\E{\frac1{\Tr^{2}(S)}}.
\label{eq:pres-aS-tTr-risk}
\end{align}}
The constant that minimizes this upper bound is $t=2\frac{r-1}{n-r-4}$, and the corresponding estimator $\hat\Sigma^+_\text{EM2} =\frac{n-r-4}n\left[S^++2\frac{r-1}{n-r-4}SS^+\Tr^{-1}(S)\right]$ dominates $\hat\Sigma^+_\text{EM1}$.
\end{prop}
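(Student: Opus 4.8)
The plan is to recognize $\hat\Sigma^+_t$ as an orthogonally invariant estimator and apply Theorem~\ref{thm:pres-ure}. Put $c=\frac{n-r-4}{n}$ and $L=\Tr(S)=\sum_{b=1}^r l_b$. Since $S^+=O_1L^{-1}O_1^t$ and $SS^+=O_1O_1^t$, we have $\hat\Sigma^+_t=O_1\Psi O_1^t$ with $\Psi=\diag(\psi_1,\dots,\psi_r)$, $\psi_k=c\bigl(\frac1{l_k}+\frac tL\bigr)$, a twice differentiable function of $L=\diag(l_1,\dots,l_r)$; note $\hat\Sigma^+_\text{EM1}$ is the same estimator with $t=0$. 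Before invoking the URE one must check its regularity condition: every term there is dominated by $\Tr(S^{+2})$, $\Tr^2(S^+)$ and nonnegative powers of $1/L$, which have finite expectation because the nonzero eigenvalues of $S$ are distributed as those of an $r\times r$ Wishart with $n-1$ degrees of freedom --- this is exactly where $r\le n-5$ is needed --- together with $\Tr^2(S^+)\le r\,\Tr(S^{+2})$ from Cauchy--Schwarz.

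Next, substitute $\psi_k=c(1/l_k+t/L)$ into the identity of Theorem~\ref{thm:pres-ure}. The algebra is routine once one observes: (i) $\partial(1/L)/\partial l_k=-1/L^2$, so the derivative sum contributes an extra $\tfrac{4(n-r-4)rt}{n^2L^2}$; (ii) the $t/L$ part of $\psi_k$ is independent of $k$, hence drops out of the divided differences, giving $\sum_{k\ne b}(\psi_k-\psi_b)/(l_k-l_b)=-c(\Tr^2(S^+)-\Tr(S^{+2}))$ with no $t$; and (iii) in the linear-in-$t$ contribution the combination $c-\tfrac{n-r-2}{n}=-\tfrac2n$ occurs. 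Subtracting the $t=0$ case, the $\Tr(S^{+2})$, $\Tr^2(S^+)$ and $\tr{\Sigma^{-2}}$ pieces cancel and one is left with the quadratic
\begin{align*}
\E{\|\hat\Sigma^+_t-\Sigma^+\|_F^2}
&=\E{\|\hat\Sigma^+_\text{EM1}-\Sigma^+\|_F^2}
+\frac{(n-r-4)^2r}{n^2}\,t^2\,\E{\frac1{\Tr^2(S)}}
\\&\qquad
+\frac{4(n-r-4)}{n^2}\,t\,\E{\frac{r}{\Tr^2(S)}-\frac{\Tr(S^+)}{\Tr(S)}}.
\end{align*}
(Alternatively, expanding $\|\hat\Sigma^+_t-\Sigma^+\|_F^2$ directly using $\|SS^+\|_F^2=\Tr(SS^+)=r$ and $\Sigma^+SS^+=\Sigma^+\Sigma\Sigma^+=\Sigma^+$ from \eqref{eq:SSSS} produces the cross term $2ct\,\Tr^{-1}(S)(c\,\Tr(S^+)-\Tr(\Sigma^+))$, and Theorem~\ref{thm:pres-ure} is what lets one evaluate $\E{\Tr(\Sigma^+)/\Tr(S)}$ through a Haff-type identity.)

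It remains to bound the linear coefficient. The crucial inequality is Cauchy--Schwarz, $\Tr(S^+)\,\Tr(S)=\bigl(\sum_k 1/l_k\bigr)\bigl(\sum_b l_b\bigr)\ge r^2$, i.e.\ $\Tr(S^+)/\Tr(S)\ge r^2/\Tr^2(S)$, so that $r/\Tr^2(S)-\Tr(S^+)/\Tr(S)\le -r(r-1)/\Tr^2(S)$ pointwise. For $t\ge0$ --- the range that matters, since it contains the minimizer --- multiplying by $\tfrac{4(n-r-4)}{n^2}t\ge0$ and taking expectations gives \eqref{eq:pres-aS-tTr-risk}. Finally $(n-r-4)t^2-4(r-1)t$ is a convex parabola minimized at $t=\tfrac{2(r-1)}{n-r-4}\ge0$ with value $-\tfrac{4(r-1)^2}{n-r-4}$, so there the extra term equals $-\tfrac{4r(r-1)^2}{n^2}\E{1/\Tr^2(S)}$, which is strictly negative when $r>1$; hence $\hat\Sigma^+_\text{EM2}$ dominates $\hat\Sigma^+_\text{EM1}$.

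The main obstacle is simply the bookkeeping in the second step --- carefully expanding the four groups of terms in the URE, handling the divided-difference term, and confirming that everything independent of $t$ reproduces the risk of $\hat\Sigma^+_\text{EM1}$ so that it cancels cleanly. This is mechanical but error-prone. The only genuinely inventive ingredient is the Cauchy--Schwarz step, and the only subtlety beyond the algebra is verifying the moment conditions in the first step, which is precisely where the hypothesis $r\le n-5$ is used.
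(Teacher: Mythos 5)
Your proposal is correct and follows essentially the same route as the paper: apply the URE of Theorem \ref{thm:pres-ure} to $\psi_k=\frac{n-r-4}{n}\bigl(1/l_k+t/\Tr(S)\bigr)$, verify the moment condition via finiteness of second inverse-Wishart moments (which is where $r\leq n-5$ enters), and bound the linear-in-$t$ coefficient using $\Tr(S^+)\,\Tr(S)\geq r^2$ before minimizing the resulting quadratic. Your explicit restriction to $t\geq 0$ in the bounding step is in fact slightly more careful than the paper, whose argument likewise only yields \eqref{eq:pres-aS-tTr-risk} for $t\geq 0$ even though the statement is phrased for all $t\in\R$; since the minimizer is nonnegative, the domination conclusion is unaffected.
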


Thus $\hat\Sigma^+_\text{EM1}$ is itself inadmissible. Again, although these results do not show $\hat\Sigma^+_\text{EM1}$ and $\hat\Sigma^+_\text{EM2}$ optimal within their classes, they are likely to possess good overall risk properties.

\subsection{Discriminant coefficients estimation}\label{subsec:disc}

A standard estimator for a singular discriminant coefficient is $S^+\bar X$. Note that since $\bar X$ and $S$ are independent, we have
{\setlength{\mathindent}{10pt}\begin{align*}
\E{S^+\bar X}=\frac{n}{n-r-2}\Sigma^+\mu
\end{align*}}
for $n-r-2>0$. Thus in this case an alternative could be the unbiased estimator $\frac{n-r-2}{n}S^+\bar X$. We will look for estimators that improve over these benchmarks and study their performance.

We first show that an unbiased estimator of the risk holds for  estimators of the form $\hat\eta=O_1\Psi O_1^t\bar X$ with $\Psi=\text{diag}(\psi_1,...,\psi_r)$ twice-differentiable functions of $L=\text{diag}(l_1,...,l_r)$.

\begin{thm}[Unbiased risk estimation for singular discriminant coefficients]\label{thm:disc-ure}
Let $\Psi^*=\text{diag}(\psi^*_1,...,\psi^*_r)$ with
\begin{align*}
\psi^*_k=\frac{n-r-2}n\frac{\psi_k}{l_k}+\frac2n\frac{\partial\psi_k}{\partial l_k}+\frac1n\sum_{b\not=k}^r\frac{\psi_k-\psi_b}{l_k-l_b}.
\end{align*}
Assume the regularity conditions
\begin{align*}
\E{\left|\sum_{k=1}^r\psi_k\right|}<\infty
\text{ and }
\E{\sum_{k=1}^r\Big|\psi^*_k\Big|}<\infty.
\end{align*}
Then
\begin{align*}
\E{\Big\|\hat\eta-\eta\Big\|_2^2}
&=\E{\frac2n\text{tr}\,\hat{\Sigma}^++\bar X^tO_1(\Psi^2-2\Psi^*)O_1^t\bar X}
\\&\qquad-\E{\vphantom{\bigg\vert}(\bar X-\mu)^t\Sigma^{+2}(\bar X+\mu)}.
\end{align*}
\end{thm}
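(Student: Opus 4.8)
The plan is to compute $\E{\|\hat\eta-\eta\|_2^2}$ by expanding the square, conditioning on $S$, and then applying Stein's lemma in $\bar X$ followed by Haff's lemma in $S$. Write $\hat\eta = O_1\Psi O_1^t\bar X = \hat\Sigma^+\bar X$ and $\eta=\Sigma^+\mu$. Since $\bar X\sim N_p(\mu,\Sigma/n)$ and $\bar X$ is independent of $S$ (hence of $O_1,L,\Psi$), expand
\[
\|\hat\eta-\eta\|_2^2 = \bar X^t O_1\Psi^2 O_1^t\bar X - 2\,\bar X^t O_1\Psi O_1^t\Sigma^+\mu + \mu^t\Sigma^{+2}\mu .
\]
The first term is already in the asserted form via the quadratic $\bar X^tO_1\Psi^2O_1^t\bar X$; the work is to rewrite the cross term. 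First I would condition on $S$ and use the fact \eqref{eq:SSSS}, $SS^+=\Sigma\Sigma^+$, so that $O_1O_1^t=SS^+=\Sigma\Sigma^+$ acts as the identity on the range of $\Sigma$; this lets one replace $\Sigma^+\mu$ by $\Sigma^+ O_1 O_1^t\mu$ and, more importantly, lets one treat $O_1\Psi O_1^t$ as a function supported on the correct subspace when integrating against the singular normal density of $\bar X$.

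The key step is a Stein-type identity for the singular normal: for a suitably differentiable vector field $g$ on the range of $\Sigma$,
\[
\E{(\bar X-\mu)^t\,\Sigma^{+}\,g(\bar X)} = \tfrac1n\,\E{\di_{\bar X}\, g(\bar X)},
\]
the $\Sigma^+$ arising because the precision of the singular normal in its support is $n\Sigma^+$. Applying this with $g(\bar X)=O_1\Psi O_1^t\bar X$ (legitimate since this field lies in and differentiates within the range of $\Sigma$) converts $\E{\bar X^tO_1\Psi O_1^t\Sigma^+\mu}$ into a piece involving $\mu^t\Sigma^{+2}\mu$-type terms plus $\tfrac1n\E{\Tr(O_1\Psi O_1^t)}=\tfrac1n\E{\Tr\hat\Sigma^+}$, accounting for the $\tfrac2n\Tr\hat\Sigma^+$ term and assembling the boundary quadratic $-(\bar X-\mu)^t\Sigma^{+2}(\bar X+\mu)$ after recombining $\mu^t\Sigma^{+2}\mu$ from the expanded square with the cross-term remainder. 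Throughout, the first regularity condition $\E{|\sum_k\psi_k|}<\infty$ is what licenses the Stein integration by parts (finiteness of $\E{\Tr\hat\Sigma^+}$).

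At this stage the expression still contains $\Sigma^+$ through terms like $\E{\bar X^tO_1\Psi O_1^t\Sigma^+\mu}$ or $\Tr(\Psi\,O_1^t\Sigma^+O_1)$ that survived the Stein step; these are eliminated by Haff's lemma applied to the Wishart matrix $S\sim W_p(n-1,\Sigma/n)$, exactly as in \cite{TsukumaKubokawa14} and as already used to prove Theorems \ref{thm:cov-ure} and \ref{thm:pres-ure}. Concretely, one uses the singular-Wishart Haff identity to trade $\E{\Tr(\Sigma^+\, h(S))}$ for $\E{\tfrac{n-r-2}n\sum_k h_k/l_k + \tfrac2n\sum_k \partial h_k/\partial l_k + \tfrac1n\sum_{b\neq k}(h_k-h_b)/(l_k-l_b)}$, which is precisely the combination defining $\psi^*_k$; the second regularity condition $\E{\sum_k|\psi^*_k|}<\infty$ is the hypothesis needed to justify this step. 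Collecting the $\Psi^2$ quadratic with the $-2\Psi^*$ term produced here yields $\bar X^tO_1(\Psi^2-2\Psi^*)O_1^t\bar X$, and recombining the residual $\Sigma^+$-quadratics gives $-(\bar X-\mu)^t\Sigma^{+2}(\bar X+\mu)$, completing the identity.

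The main obstacle I anticipate is the bookkeeping around the singular-support Stein step: one must be careful that $O_1\Psi O_1^t\bar X$ is differentiated only within the affine support $\mu + \mathrm{range}(\Sigma)$ of $\bar X$, that cross-derivatives hitting $O_1$ (which depends on $S$, not $\bar X$) vanish under the conditioning, and that the divergence term is genuinely $\tfrac1n\Tr(O_1\Psi O_1^t)$ rather than a full $p$-dimensional divergence — this is exactly where \eqref{eq:SSSS} does the essential work. The subsequent Haff step is then essentially the same computation already carried out for Theorems \ref{thm:cov-ure} and \ref{thm:pres-ure}, so I expect it to be routine given those.
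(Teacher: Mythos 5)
Your proposal follows essentially the same route as the paper's proof: expand the squared loss, integrate by parts in $\bar X$ via Stein's lemma to produce the $\frac2n\Tr\,\hat\Sigma^+$ term, and apply a Haff-type Wishart identity to replace $\Sigma^+O_1\Psi O_1^t$ by $O_1\Psi^*O_1^t$, with the two regularity conditions licensing exactly those two integration-by-parts steps. The only difference is presentational: where you invoke singular-model versions of Stein's and Haff's lemmas directly in $p$ dimensions, the paper first changes variables to $G=H^t\mu+\bar Y\sim \text{N}_r(H^t\mu,\Omega/n)$ and $T=H^tSH\sim\text{W}_r(n-1,\Omega/n)$ using $O_1O_1^t=HH^t$, so that the classical nonsingular lemmas apply---which is precisely the rigorous justification of the singular-support identities you assert.
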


Let us now consider estimators that are proportional to the naive estimator, that is, of the form $aS^+\bar X$ for $a$ constant. The following optimality result holds over this class.

\begin{prop}\label{prop:disc-aS} Let $1\leq r\leq n-5$. The risk of estimators of the form $aS^+\bar X$ for $a\leq\frac{n-r-2}n$ can be bounded in terms of the risk of $\frac{n-r-2}{n}S^+\bar X$ by
{\setlength{\mathindent}{5pt}\begin{align}&
\E{\Big\|aS^+\bar X-\eta\Big\|_2^2}
\leq\;
\E{\Big\|\frac{n-r-2}nS^+\bar X-\eta\Big\|_2^2}
\notag\\&\hspace{60pt}
+\bigg(a-\frac{n-r-2}{n}\bigg)\bigg(a-\frac{n-r-4}{n}\bigg)\,\text{E}\Big(\bar X^tS^{+2}\bar X\Big).
\label{eq:disc-aS-risk}
\end{align}}
The constant that minimizes this upper bound is $a=\frac{n-r-3}n$, and the corresponding estimator $\hat{\eta}_\text{TK1}=\frac{n-r-3}nS^+\bar X$ dominates $\frac{n-r-2}{n}S^+\bar X$, which itself dominates $S^+\bar X$.
\end{prop}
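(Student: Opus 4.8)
The plan is to feed the one-parameter family $\hat\eta=aS^+\bar X$ into the unbiased risk estimator of Theorem~\ref{thm:disc-ure}, which gives the exact risk of $aS^+\bar X$ as an explicit quadratic polynomial in $a$, and then to compare that polynomial with the quoted upper bound. Almost all of the work is algebraic bookkeeping on top of the URE; the one substantive point is a cancellation described below, which is also what forces the restriction $a\le\frac{n-r-2}{n}$.

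First I would specialize Theorem~\ref{thm:disc-ure}. The estimator $aS^+\bar X=O_1(aL^{-1})O_1^t\bar X$ corresponds to $\psi_k=a/l_k$, for which $\partial\psi_k/\partial l_k=-a/l_k^2$ and $(\psi_k-\psi_b)/(l_k-l_b)=-a/(l_kl_b)$; using $\sum_{b\neq k}^r l_b^{-1}=\tr{S^+}-l_k^{-1}$ this gives
\begin{align*}
\psi_k^*=\frac an\left[\frac{n-r-3}{l_k^2}-\frac{\tr{S^+}}{l_k}\right],\qquad k=1,\dots,r .
\end{align*}
Hence $\hat\Sigma^+=O_1\Psi O_1^t=aS^+$, $O_1\Psi^2O_1^t=a^2S^{+2}$ and $O_1\Psi^*O_1^t=\frac an\big[(n-r-3)S^{+2}-\tr{S^+}\,S^+\big]$, and substituting into the URE of Theorem~\ref{thm:disc-ure} and collecting powers of $a$ yields an exact identity
\begin{align*}
\E{\big\|aS^+\bar X-\eta\big\|_2^2}=A\,a^2+B\,a+C ,
\end{align*}
with $A=\E{\bar X^tS^{+2}\bar X}$, $B=\frac2n\E{\tr{S^+}\big(1+\bar X^tS^+\bar X\big)}-\frac{2(n-r-3)}{n}A$ and $C=-\E{(\bar X-\mu)^t\Sigma^{+2}(\bar X+\mu)}$ (the value of $C$ is immaterial below). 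These expectations are finite precisely when $1\le r\le n-5$, since the nonzero eigenvalues of $S$ behave like those of a nonsingular $r\times r$ Wishart with $n-1$ degrees of freedom, so $\E{\tr{S^{+2}}}<\infty$ iff $n-1>r+3$, and $\bar X$ has finite second moments; the same bound verifies the regularity hypotheses of Theorem~\ref{thm:disc-ure} for $\psi_k=a/l_k$.

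Now write $a_0=\frac{n-r-2}{n}$ and $a_1=\frac{n-r-4}{n}$, so that $a_0+a_1=\frac{2(n-r-3)}{n}$. Because the risk is a quadratic in $a$ with leading coefficient $A$, a one-line manipulation gives
\begin{align*}
&\Big[\E{\big\|a_0S^+\bar X-\eta\big\|_2^2}+(a-a_0)(a-a_1)A\Big]-\E{\big\|aS^+\bar X-\eta\big\|_2^2}\\
&\qquad=(a_0-a)\big[A(a_0+a_1)+B\big].
\end{align*}
The bracket on the right collapses to $A(a_0+a_1)+B=\frac2n\E{\tr{S^+}\big(1+\bar X^tS^+\bar X\big)}$, which is nonnegative since $S^+$ is positive semidefinite; hence for every $a\le a_0$ the displayed difference is $\ge0$, which is exactly \eqref{eq:disc-aS-risk}. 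Recognizing this collapse --- the fact that the ``nuisance'' coefficient $B$ is tuned so that $A(a_0+a_1)+B$ reduces to a manifestly nonnegative expectation, which is also why the bound is one-sided and only valid for $a\le a_0$ --- is the only step I expect to require care; the rest is routine.

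Finally, the upper bound $\E{\|a_0S^+\bar X-\eta\|_2^2}+(a-a_0)(a-a_1)A$ is a parabola in $a$ opening upward, minimized at $a=\tfrac12(a_0+a_1)=\frac{n-r-3}{n}$, which lies in the admissible range $a\le a_0$; there $(a-a_0)(a-a_1)=-1/n^2$, so $\E{\|\hat\eta_\text{TK1}-\eta\|_2^2}\le\E{\|a_0S^+\bar X-\eta\|_2^2}-\tfrac1{n^2}A$. Since $\Sigma$ has rank $r\ge1$, $\bar X$ lies a.s.\ in an affine subspace of dimension $r$ on which it has a nondegenerate Gaussian law, so $S^+\bar X\neq0$ a.s.\ and thus $A=\E{\bar X^tS^{+2}\bar X}>0$; hence $\hat\eta_\text{TK1}$ strictly dominates $\frac{n-r-2}{n}S^+\bar X$. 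For the remaining claim I would just evaluate the quadratic at $a=1$:
\begin{align*}
\E{\big\|S^+\bar X-\eta\big\|_2^2}-\E{\big\|a_0S^+\bar X-\eta\big\|_2^2}
=(1-a_0)\big[A(1+a_0)+B\big]
=(1-a_0)\Big[A(1-a_1)+\tfrac2n\E{\tr{S^+}\big(1+\bar X^tS^+\bar X\big)}\Big],
\end{align*}
and both factors are positive because $1-a_0=\frac{r+2}{n}>0$ and $1-a_1=\frac{r+4}{n}>0$, so $\frac{n-r-2}{n}S^+\bar X$ dominates $S^+\bar X$.
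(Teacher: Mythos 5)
Your proof is correct and follows essentially the same route as the paper's: specialize the URE of Theorem \ref{thm:disc-ure} to $\psi_k=a/l_k$, write the risk as an explicit quadratic in $a$, and discard the nonpositive remainder $\frac2n\left(a-\frac{n-r-2}{n}\right)\E{\Tr(S^+)\left(1+\bar X^tS^+\bar X\right)}$ for $a\leq\frac{n-r-2}{n}$ --- your ``collapse'' of $A(a_0+a_1)+B$ is exactly the paper's step of dropping those two nonpositive linear terms. The subsequent evaluations at $a=\frac{n-r-3}{n}$ and $a=1$ likewise reproduce the paper's domination arguments.
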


Thus $\frac{n-r-2}{n}S^+$ and $S^+$ are inadmissible. Again, note that our bound on the risk only holds on the subset $a\leq\frac{n-r-2}n$. Presumably, estimators $aS^+$ with $a>\frac{n-r-2}n$ do not dominate $\frac{n-r-2}nS^+\bar X$, but we have not been able to prove this result.

We can further extend this result by considering a larger class of estimators of the form $\frac{n-r-3}n\left[S^++t\,SS^+\Tr^{-1}(S)\right]\bar X$ for $t$ constant. Estimators of this form were first considered by \cite{DeySrinivasan91}. It is possible to bound the difference in risk with the one of $\hat\eta_\text{TK1}$ as follows.

\begin{prop}\label{prop:disc-aS-tTr} Let $1\leq r\leq n-5$. The risk of estimators of the form $\hat\eta_t=\frac{n-r-3}n\left[S^++t\,SS^+\Tr^{-1}(S)\right]\bar X$ for $t\in\R$ can be bounded in terms of the risk of $\eta_\text{TK1}=\frac{n-r-3}{n}S^+\bar X$ through
{\setlength{\mathindent}{5pt}\begin{align}&
\E{\|\hat\eta_t-\eta\|_2^2}
\leq\;
\E{\Big\|\hat\eta_\text{TK1}-\eta\|_2^2}
\notag\\&\hspace{60pt}
+\frac{(n-r-3)}{n^2}\bigg[2(r+1)t
+(n-r-3)t^2\bigg]\E{\frac1{\Tr(S)}}.
\label{eq:disc-aS-tTr-risk}
\end{align}}
The constant that minimizes this upper bound is $t=-\frac{r+1}{n-r-3}$, and the corresponding estimator $\hat\eta_\text{TK2} =\frac{n-r-3}n\left[S^+-\frac{r+1}{n-r-3}SS^+\Tr^{-1}(S)\right]\bar X$ dominates $\hat\eta_\text{TK1}$.
\end{prop}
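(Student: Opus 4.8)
The plan is to use Theorem~\ref{thm:disc-ure} to express the risk of $\hat\eta_t$ exactly, then bound the extra terms that appear relative to $\hat\eta_\text{TK1}$, obtaining the quadratic-in-$t$ upper bound \eqref{eq:disc-aS-tTr-risk}. First I would identify the orthogonally invariant form of $\hat\eta_t$: writing $\Tr(S)=\sum_b l_b$, the estimator $\hat\eta_t=\frac{n-r-3}n\left[S^++t\,SS^+\Tr^{-1}(S)\right]\bar X$ corresponds to $\psi_k=\frac{n-r-3}n\left(\tfrac1{l_k}+\tfrac{t}{\sum_b l_b}\right)$, since $SS^+=O_1O_1^t$ has eigenvalue $1$ on the relevant subspace. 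The estimator $\hat\eta_\text{TK1}$ is the $t=0$ case, with $\psi_k^{(0)}=\frac{n-r-3}{n\,l_k}$.

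Next I would plug $\psi_k$ into the URE of Theorem~\ref{thm:disc-ure}, namely $\E{\tfrac2n\Tr\hat\Sigma^+ + \bar X^tO_1(\Psi^2-2\Psi^*)O_1^t\bar X} - \E{(\bar X-\mu)^t\Sigma^{+2}(\bar X+\mu)}$, and subtract the same expression evaluated at $t=0$. The final $\Sigma$-dependent term is common to both and cancels. For the remaining pieces I would compute the increments $\Delta(\Tr\hat\Sigma^+)$, $\Delta\psi_k^2$, and $\Delta\psi_k^*$ as functions of $t$. The term $\Tr\hat\Sigma^+ = \frac{n-r-3}n\sum_k(\tfrac1{l_k}+\tfrac{t}{\sum_b l_b}) = \frac{n-r-3}n(\Tr(S^+) + \tfrac{rt}{\Tr(S)})$ contributes a term linear in $t$. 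For $\Delta\psi_k^2 = \psi_k^2-(\psi_k^{(0)})^2$, writing $\psi_k=\psi_k^{(0)}+\frac{(n-r-3)t}{n\Tr(S)}$, the cross term gives $\frac{2(n-r-3)t}{n\Tr(S)}\psi_k^{(0)}$ and the square gives $\frac{(n-r-3)^2t^2}{n^2\Tr^2(S)}$. For $\Delta\psi_k^*$: since $\psi_k^*$ is linear in the $\psi$'s, $\Delta\psi_k^*$ is the value of the $\psi_k^*$ functional applied to the increment $\frac{(n-r-3)t}{n\Tr(S)}$, which is a \emph{constant} in $k$ (independent of $l_k$); hence $\partial/\partial l_k$ of it is $-\frac{(n-r-3)t}{n\Tr^2(S)}$ and the divided-difference sum $\sum_{b\neq k}(\psi_k^*-\psi_b^*)/(l_k-l_b)$ vanishes because the increment is symmetric in the indices. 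Collecting: $\Delta\psi_k^* = \frac{n-r-2}n\cdot\frac1{l_k}\cdot\frac{(n-r-3)t}{n\Tr(S)} + \frac2n\cdot\left(-\frac{(n-r-3)t}{n\Tr^2(S)}\right)$.

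Then I would assemble $\bar X^tO_1(\Delta\Psi^2 - 2\Delta\Psi^*)O_1^t\bar X$. Using $\bar X^tO_1\diag(1/l_k)O_1^t\bar X = \bar X^tS^+\bar X$ and $\bar X^tO_1O_1^t\bar X = \bar X^t SS^+\bar X$, the $\psi_k^{(0)}$-weighted cross terms and the $\Delta\psi_k^*$ terms involving $1/l_k$ should combine and simplify; I expect the coefficient of $\bar X^tS^+\bar X/\Tr(S)$ to vanish after using the definition of $\hat\eta_\text{TK1}$ (this is exactly where the specific constant $\frac{n-r-3}n$ is chosen to make the linear-in-$t$ first-order term collapse), leaving only terms proportional to $\bar X^tSS^+\bar X/\Tr^2(S)$, $r\,t/\Tr(S)$ and $t/\Tr^2(S)$. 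Finally I would bound $\bar X^tSS^+\bar X\leq \Tr(S)\cdot(\text{something})$ or, more to the point, use $\bar X^t SS^+\bar X\le \Tr(S)$ is \emph{not} what is needed---rather, note $\bar X^t SS^+\bar X \le l_1 \le \Tr(S)$ would give the wrong scaling, so instead I would discard or bound the quadratic-form term by observing that the matched $\E{1/\Tr(S)}$ structure in \eqref{eq:disc-aS-tTr-risk} forces us to bound $\bar X^tSS^+\bar X/\Tr^2(S) \le 1/\Tr(S)$ via $\bar X^tSS^+\bar X=\|O_1^t\bar X\|^2\le \Tr(S)$ only when... The cleanest route: since $\hat\eta_t-\hat\eta_\text{TK1} = \frac{(n-r-3)t}{n\Tr(S)}SS^+\bar X = \frac{(n-r-3)t}{n\Tr(S)}O_1O_1^t\bar X$, its squared norm is $\frac{(n-r-3)^2t^2}{n^2\Tr^2(S)}\bar X^tSS^+\bar X$, and bounding $\bar X^tSS^+\bar X\le\Tr(S)$ gives exactly the $t^2$ term with coefficient $\frac{(n-r-3)^2}{n^2}\E{1/\Tr(S)}$; the linear term comes purely from the URE cross terms computed above. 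Matching the announced coefficients $\frac{(n-r-3)}{n^2}[2(r+1)t + (n-r-3)t^2]$ then pins down the bookkeeping.

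The main obstacle I anticipate is the careful cancellation in the linear-in-$t$ term: verifying that the contributions from $\Delta(\tfrac2n\Tr\hat\Sigma^+)$, from the $2\psi_k^{(0)}$ cross terms in $\Delta\psi_k^2$, and from $-2\Delta\psi_k^*$ recombine to produce exactly the coefficient $2(r+1)$ of $t$ (and no surviving $\bar X^tS^+\bar X$-type term), since this is the step that both justifies the choice of $\hat\eta_\text{TK1}$ as the optimal proportional estimator and determines the sign that makes $t=-\frac{r+1}{n-r-3}$ the minimizer. Once the quadratic bound \eqref{eq:disc-aS-tTr-risk} is established, minimizing over $t$ is elementary calculus, and strict domination for the minimizing $t\neq0$ (which holds iff $r\geq1$, always true here) follows because the parabola's minimum value is strictly negative.
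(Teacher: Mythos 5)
Your setup coincides with the paper's: take $\psi_k=\frac{n-r-3}{n}\big(1/l_k+t\,\Tr^{-1}(S)\big)$, apply Theorem \ref{thm:disc-ure}, and subtract the $t=0$ case; your computations of $\Delta\big(\Tr\hat\Sigma^+\big)$, $\Delta\psi_k^2$ and $\Delta\psi_k^*$ are all correct (in particular the divided-difference sum contributes nothing to $\Delta\psi_k^*$ because the increment is the same for every $k$). The genuine gap is in the assembly step: the coefficient of $\bar X^tS^+\bar X/\Tr(S)$ in the linear-in-$t$ part does \emph{not} vanish. The cross term of $\Delta\psi_k^2$ contributes $2(n-r-3)^2/n^2$ to the weight of $t/(l_k\Tr(S))$ while $-2\Delta\psi_k^*$ contributes $-2(n-r-2)(n-r-3)/n^2$, leaving a net $-\tfrac{2(n-r-3)}{n^2}\,t\,\E{\bar X^tS^+\bar X/\Tr(S)}$, exactly as in the paper's exact expression for the difference in risk. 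Had this term cancelled, the surviving linear contributions (namely $\tfrac{2(n-r-3)r}{n^2}t\,\E{\Tr^{-1}(S)}$ from $\tfrac2n\Delta\Tr\hat\Sigma^+$ and $\tfrac{4(n-r-3)}{n^2}t\,\E{\bar X^t\bar X/\Tr^2(S)}$ from $-2\Delta\psi_k^*$) would produce the coefficient $2(r+2)t$, not the stated $2(r+1)t$, so your plan cannot reach \eqref{eq:disc-aS-tTr-risk} as written.

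What the paper does instead, and what your sketch is missing, is two bounds on this surviving term. First, $\bar X^t\bar X=\tr{SS^+\bar X\bar X^t}\leq\Tr(S)\,\bar X^tS^+\bar X$, which gives $-\bar X^tS^+\bar X/\Tr(S)\leq-\bar X^t\bar X/\Tr^2(S)$ and merges the $-2$ and $+4$ contributions into $+\tfrac{2(n-r-3)}{n^2}t\,\E{\bar X^t\bar X/\Tr^2(S)}$. Second, the nontrivial inequality $\bar X^t\bar X\leq\Tr(S)$, which the paper proves via the reduced singular value decomposition $X=\sqrt nV_1L^{1/2}O_1$ and $\sigma_{\max}(1_n1_n^t/n)=1$; this converts both the residual linear term and your (correctly identified) quadratic term $\tfrac{(n-r-3)^2}{n^2}t^2\,\E{\bar X^t\bar X/\Tr^2(S)}$ into multiples of $\E{1/\Tr(S)}$, yielding the coefficients $2(r+1)t+(n-r-3)t^2$. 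You do invoke a bound of the form $\bar X^tSS^+\bar X\leq\Tr(S)$ for the quadratic term, but without the first inequality the linear coefficient cannot be obtained. One further caution: these bounds are applied after multiplication by $t$ and therefore preserve the inequality only for $t\geq0$, while the minimizer is negative; the paper's own proof glosses over this point in the same way, but you should be aware of it when writing out the argument.
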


Thus $\hat\eta_\text{TK1}$ is itself inadmissible. One again, although these results do not show $\hat\eta_\text{TK1}$ and $\hat\eta_\text{TK2}$ optimal within their classes, they are likely to have good overall risk properties.

\section{Numerical study}\label{sec:simulations}

In this section we investigate the risk performance of the proposed estimator for covariance, precision and discriminant coefficients estimation through two simulation studies.  We also consider the performance of the diagonal of the sample covariance matrix, $\text{diag}(S)$. In various applications, using this estimator is a popular approach to overcome the singularity problem of the sample covariance. Although it ignores the dependency structure, this estimator is appealing because of its simplicity, and was suggested by the results of \cite{bickel2004}.

\subsection{Autoregressive simulation}\label{subsec:AR}

We let $(n,p)$ be $(150,100)$, $(200,100)$, $(200,150)$ and $(250,150)$. For each $r$ from $1$ to $(n-4)\wedge p$, we constructed the true covariance matrix $\Sigma$ from an autoregressive structure with coefficient $0.9$ and set its $p-r$ smallest eigenvalues to zero to create a rank $r$ matrix, as described in Algorithm \ref{alg:sigma}. We then randomly generated $1,000$ replications from a multivariate normal distribution with mean $\mu=(1,\dots,1)$ and singularized autoregressive covariance $\Sigma$, and computed the resulting sample covariance matrix $S=X^tX/n$.

\begin{algorithm}[ht]
\KwData{$p$, $r$}
\KwResult{$\Sigma$}
\For{$i,j\in\{1,...,p\}$}{
$\Sigma_{ij}=0.5^{|i-j|}$
}
\For{$k\in\{r+1,...,p\}$}{
$\lambda_k(\Sigma)=0$
}
\caption{Algorithm for generating $\Sigma$}
\label{alg:sigma}
\end{algorithm}

For the covariance matrix estimation problem, we computed the Percentage Reduction In Average Loss (PRIAL) with respect to $\frac{n}{n-1}S$ in invariant squared loss $L(\hat\Sigma,\Sigma)=\Tr[(\hat{\Sigma}\Sigma^+-I_p)^2]$ for four estimators. The first three are the estimators $S$, $\hat{\Sigma}_{\text{HF}1}=\frac{n}{n+r}S$ and $\hat{\Sigma}_{\text{HF}2}=\frac{n}{n+r}\big[S+\frac{r-1}{n-r+2}SS^+\Tr^{-1}(S^+)\big]$ considered in Subsection \ref{subsec:cov}. We also included as fourth estimator the diagonal of the sample covariance matrix $\text{diag}(S)$.  The simulation results are given in Figure \ref{fig:cov}. We notice that $\hat{\Sigma}_{\text{HF}1}$ and $\hat{\Sigma}_{\text{HF}2}$ behave similarly, and both improve substantially on $S$, while the diagonal estimator does much worse.

Similarly, for the precision matrix estimation problem, we estimated the PRIAL with respect to  $S^+$ in the Frobenius loss $L(\hat\Sigma^+,\Sigma^+)=\|\hat\Sigma^+-\Sigma^+\|_F^2$ for four estimators. The first three are the estimators $\frac{n-r-2}{n}S^+$, $\hat{\Sigma}_{\text{EM}1}=\frac{n-r-4}{n}S^+$ and $\hat{\Sigma}_{\text{EM}2}= \frac{n-r-4}{n}\big[S^++2\frac{r-1}{n-r-4}SS^+\Tr^{-1}(S)\big]$ from Subsection \ref{subsec:pres}. The fourth one is the inverse of the diagonal of the sample covariance matrix, $\text{diag}(S)^{-1}$. The simulation results are given in Figure \ref{fig:pres}. We can see that all first three estimators improve substantially over $S^+$, but do not differ significantly in risk. In contrast, the diagonal estimator performs well when the true matrix is almost full rank, but becomes worse and worse for smaller covariance ranks.

Finally, for the discriminant coefficient estimation problem, we estimated the PRIAL with respect to $S^+\bar X$ in the square loss $L(\hat\eta,\eta)=\|\hat\eta-\eta\|_2^2$ for four estimators. The first three estimators are $\frac{n-r-2}nS^+\bar X$, $\hat\eta_{\text{TK}1}=\frac{n-r-3}nS^+\bar X$ and $\hat\eta_{\text{TK}2}=\frac{n-r-3}{n}\big[S^+-\frac{r+1}{n-r-3}\Tr^{-1}(S)\big]\bar X$, which were considered in Subsection \ref{subsec:disc}. The fourth one is the estimator $\text{diag}(S)^{-1}\bar X$, which has been considered in linear discriminant analysis when $p>n$. The simulation results are given in Figure \ref{fig:disc}. In this case again, all first three estimators have similar risk and substantially improve on the naive estimator, $S^+\bar X$, while the diagonal estimator is acceptable only when the true covariance matrix is almost full rank and quite bad otherwise.

\begin{figure}[t]\centering
\begin{subfigure}[b]{0.49\textwidth}\centering
\includegraphics[width=\textwidth,clip,trim=0pt 10pt 10pt 50pt]{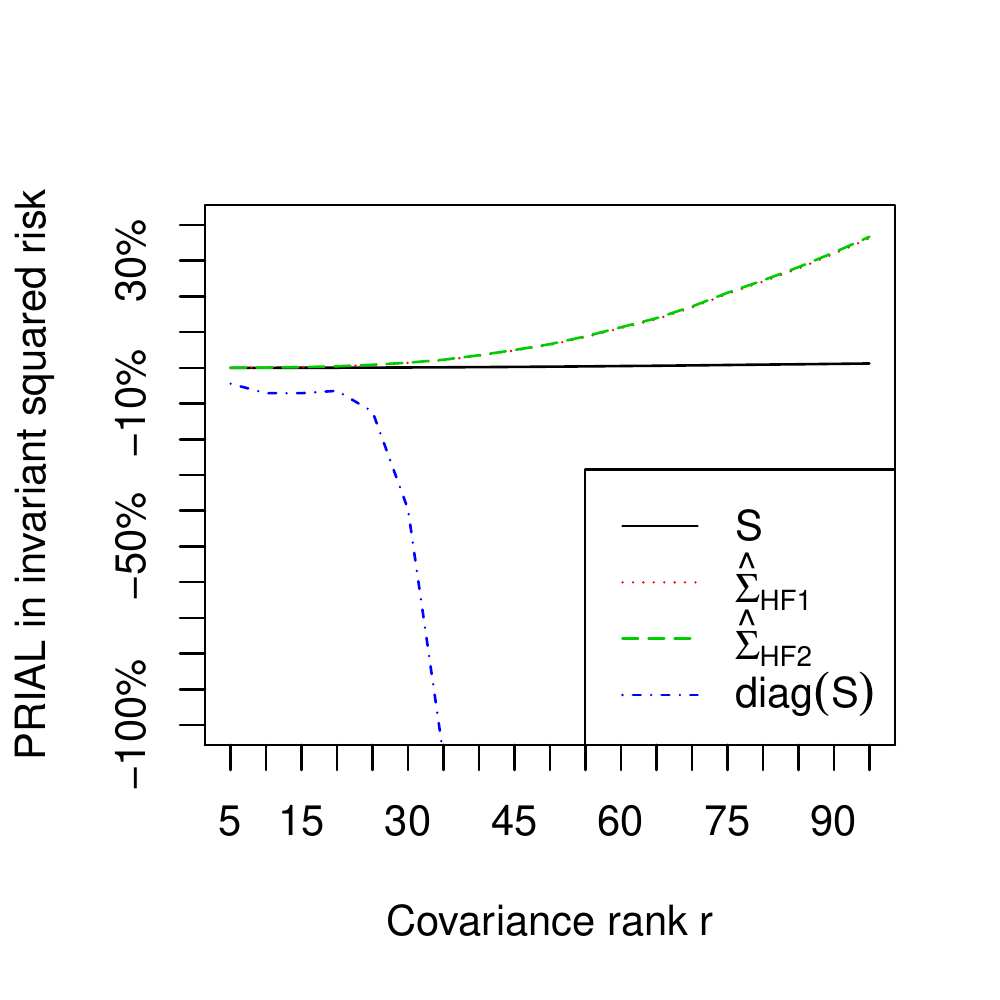}
\caption{n=150 and p=100}
\label{fig:cov-a}
\end{subfigure}
\begin{subfigure}[b]{0.49\textwidth}\centering
\includegraphics[width=\textwidth,clip,trim=0pt 10pt 10pt 50pt]{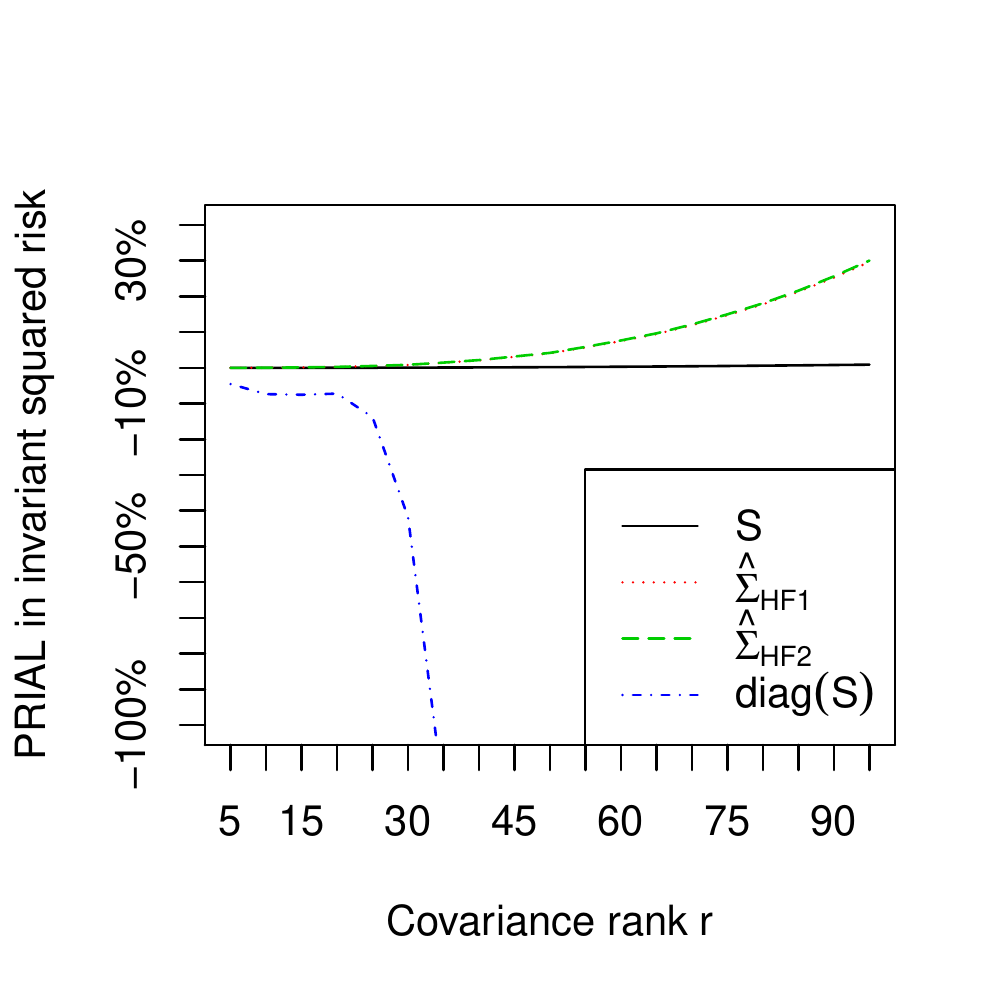}
\caption{n=200 and p=100}
\label{fig:cov-b}
\end{subfigure}
\\
\begin{subfigure}[b]{0.49\textwidth}\centering
\includegraphics[width=\textwidth,clip,trim=0pt 10pt 10pt 50pt]{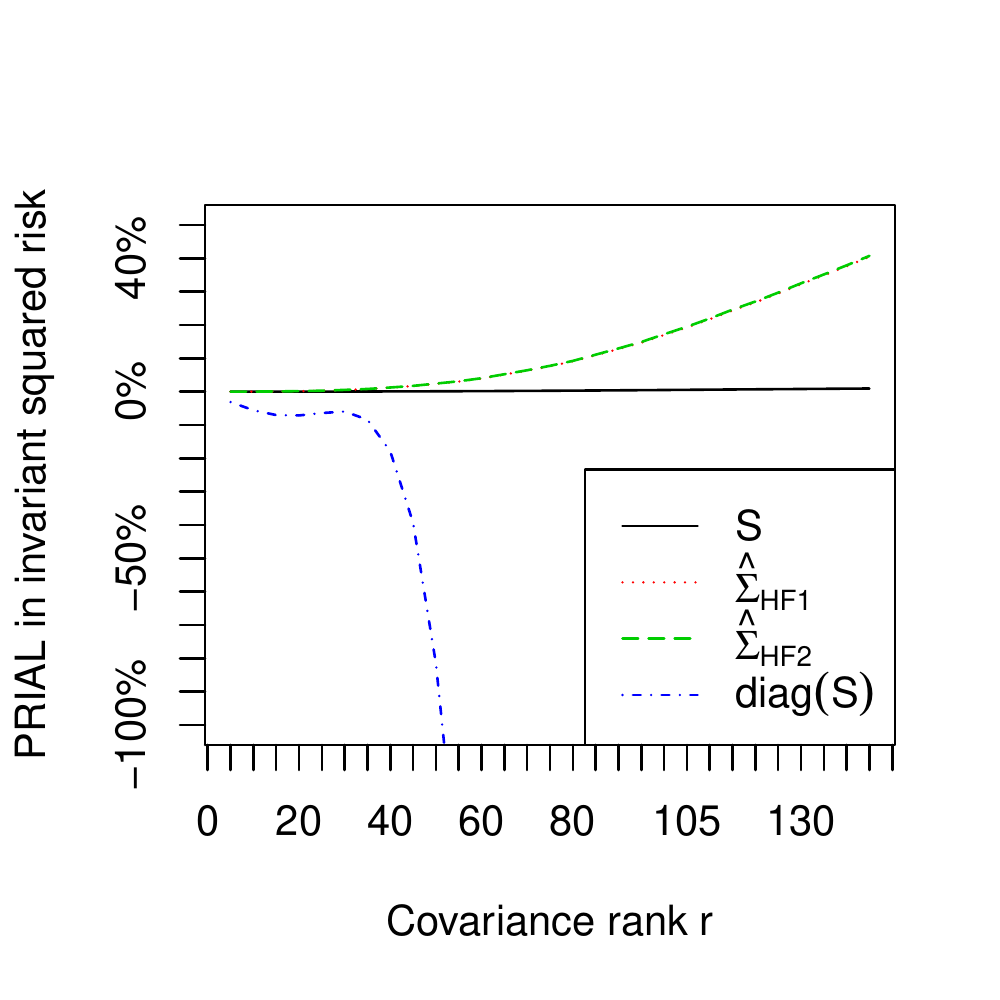}
\caption{n=200 and p=150}
\label{fig:cov-c}
\end{subfigure}
\begin{subfigure}[b]{0.49\textwidth}\centering
\includegraphics[width=\textwidth,clip,trim=0pt 10pt 10pt 50pt]{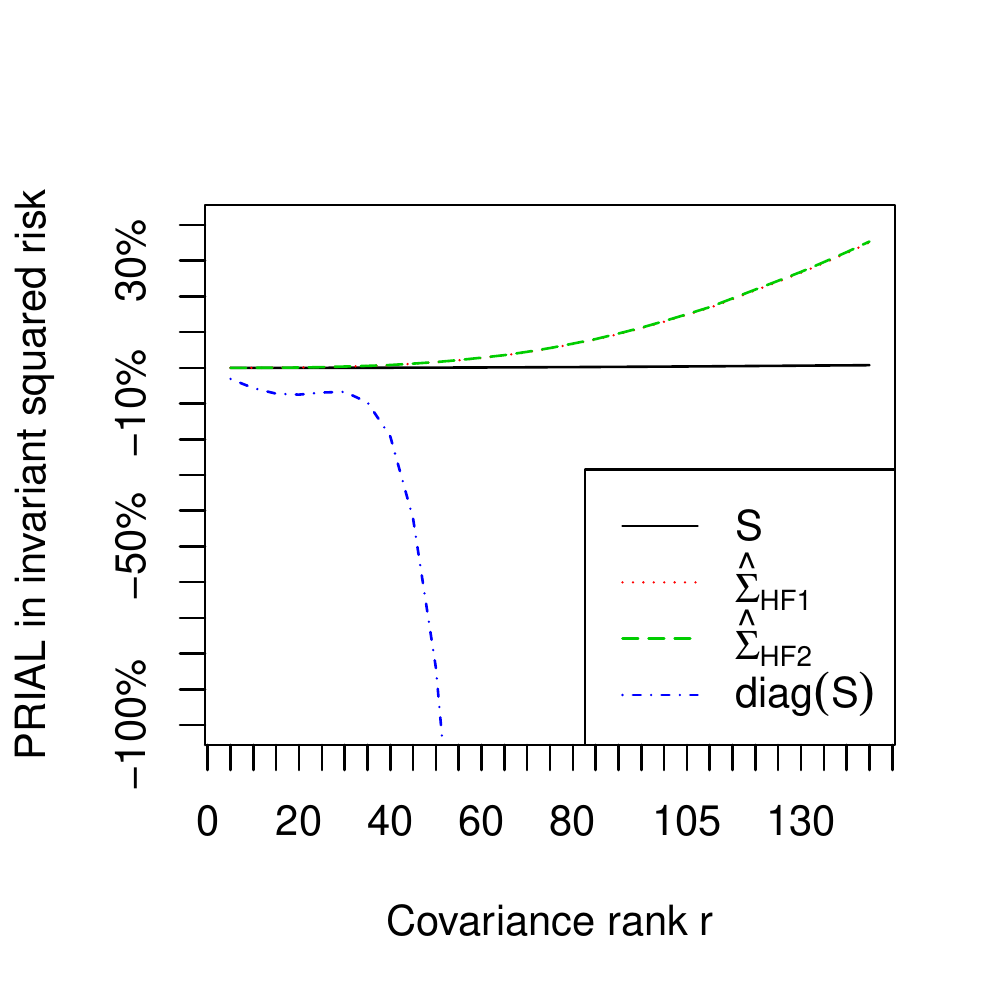}
\caption{n=250 and p=150}
\label{fig:cov-d}
\end{subfigure}
\caption{PRIAL of $S$, $\hat{\Sigma}_{\text{HF}1}$, $\hat{\Sigma}_{\text{HF}2}$ and $\text{diag}(S)$ with respect to $\frac{n-1}nS$ for estimating $\Sigma$ in invariant squared loss.}
\label{fig:cov}
\end{figure}

\begin{figure}[ht]\centering
\begin{subfigure}[b]{0.49\textwidth}\centering
\includegraphics[width=\textwidth,clip,trim=0pt 10pt 10pt 50pt]{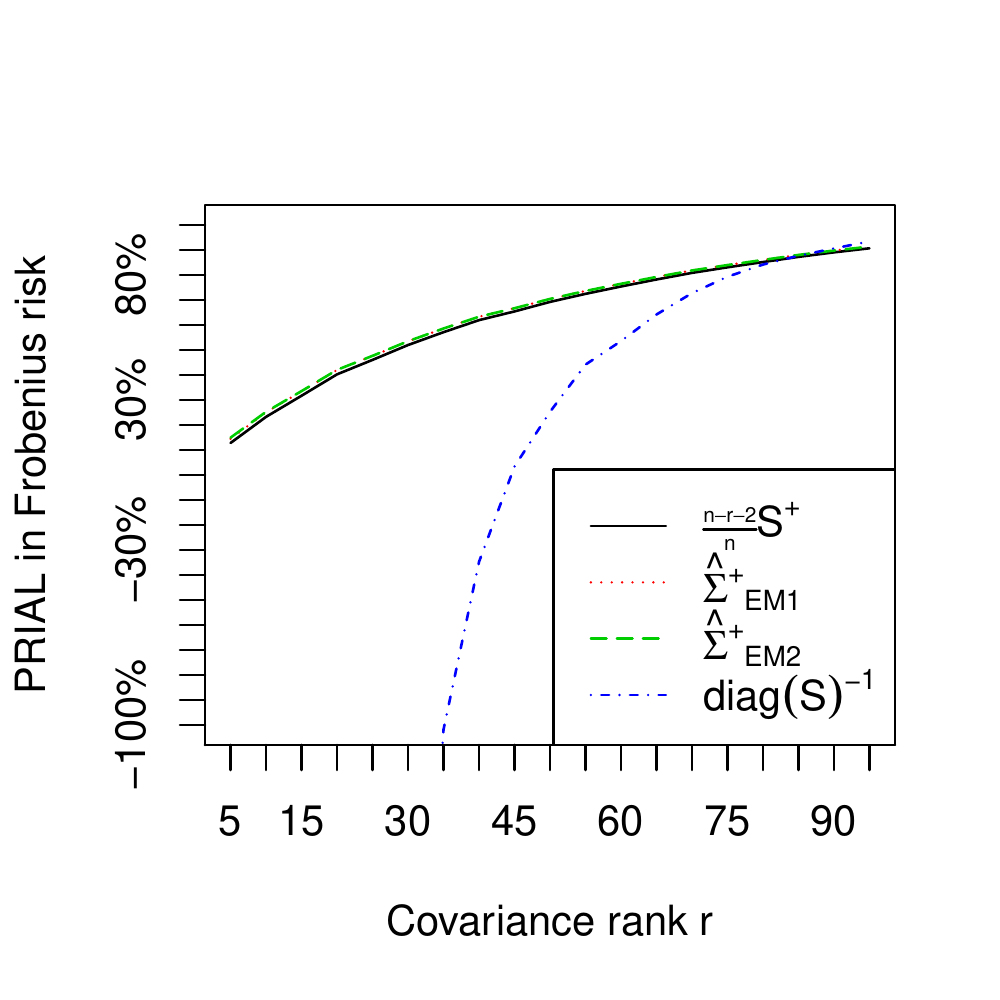}
\caption{n=150 and p=100}
\label{fig:pres-a}
\end{subfigure}
\begin{subfigure}[b]{0.49\textwidth}\centering
\includegraphics[width=\textwidth,clip,trim=0pt 10pt 10pt 50pt]{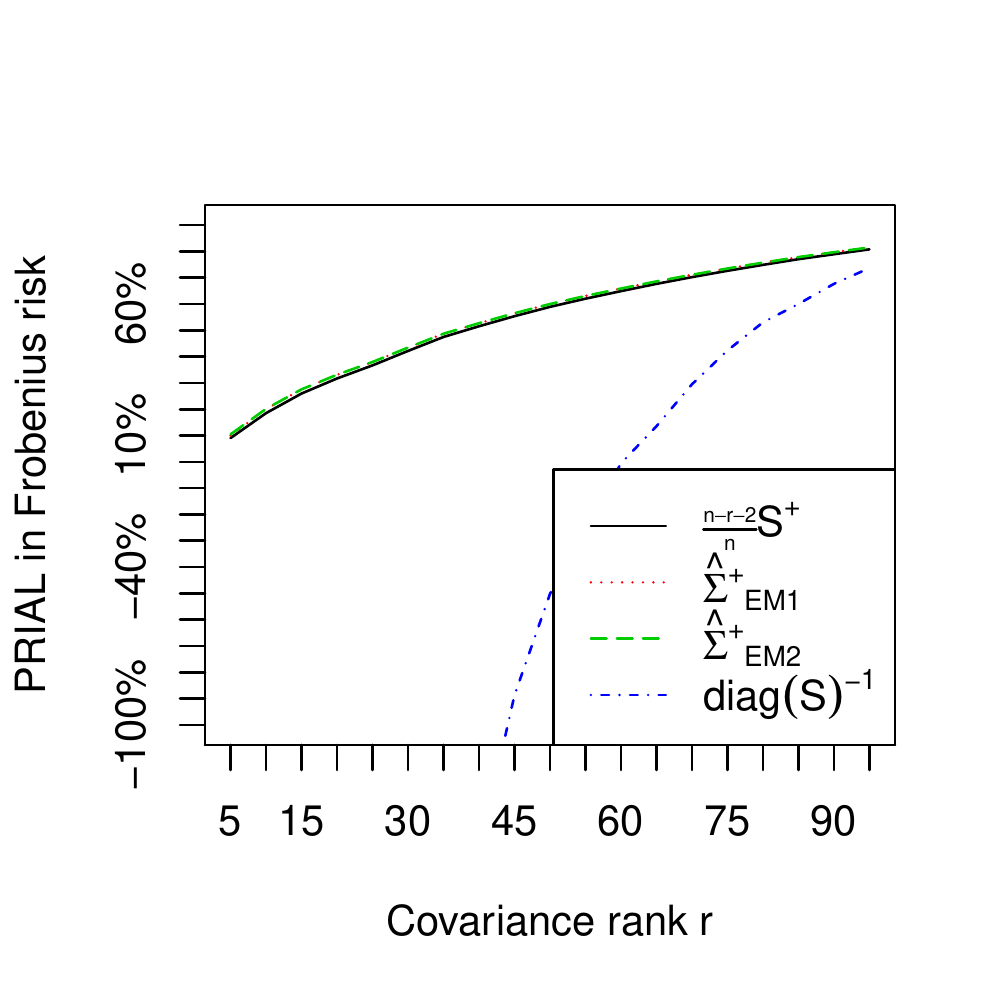}
\caption{n=200 and p=100}
\label{fig:pres-b}
\end{subfigure}
\\
\begin{subfigure}[b]{0.49\textwidth}\centering
\includegraphics[width=\textwidth,clip,trim=0pt 10pt 10pt 50pt]{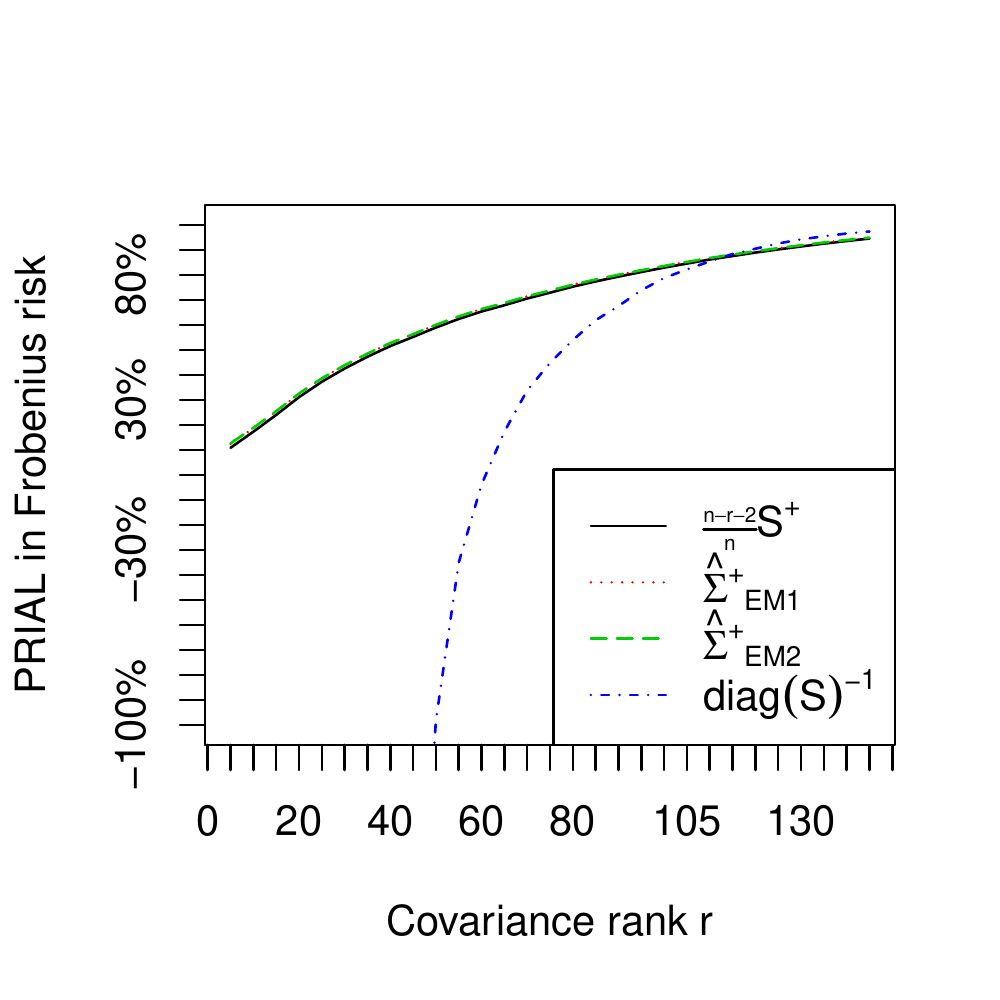}
\caption{n=200 and p=150}
\label{fig:pres-c}
\end{subfigure}
\begin{subfigure}[b]{0.49\textwidth}\centering
\includegraphics[width=\textwidth,clip,trim=0pt 10pt 10pt 50pt]{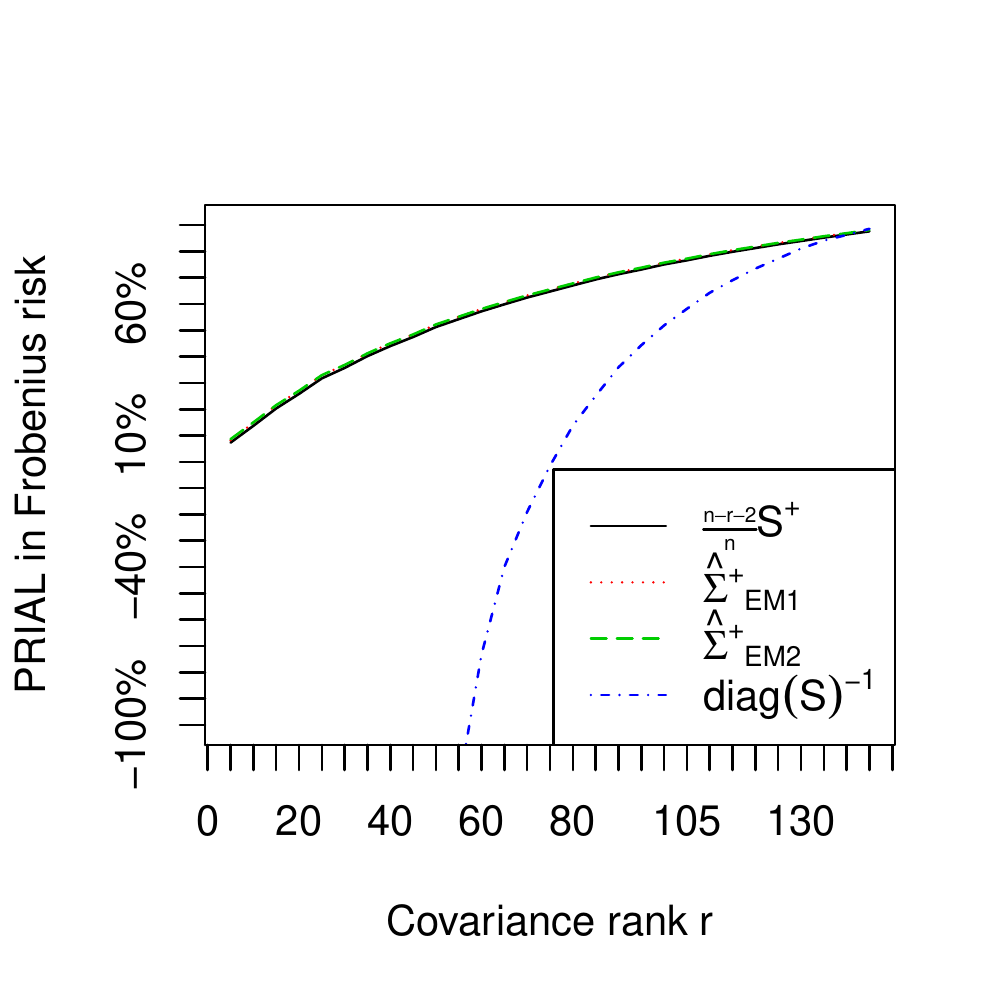}
\caption{n=250 and p=150}
\label{fig:pres-d}
\end{subfigure}
\caption{PRIAL of $\frac{n-r-2}nS^+$, $\hat{\Sigma}^+_{\text{EM}1}$, $\hat{\Sigma}^+_{\text{EM}2}$ and $\text{diag}(S)^{-1}$ with respect to $S^+$ for estimating $\Sigma^+$ in Frobenius loss.}
\label{fig:pres}
\end{figure}

\begin{figure}[ht]\centering
\begin{subfigure}[b]{0.49\textwidth}\centering
\includegraphics[width=\textwidth,clip,trim=0pt 10pt 10pt 50pt]{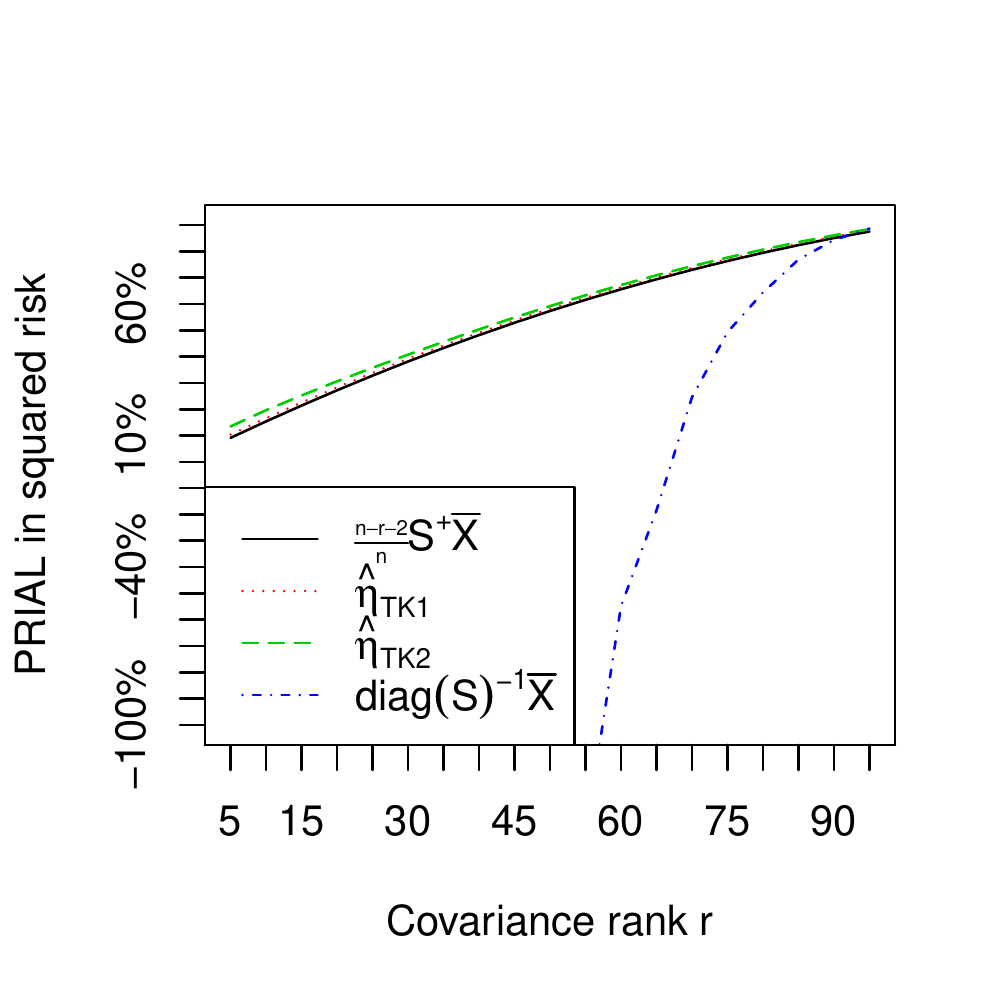}
\caption{n=150 and p=100}
\label{fig:disc-a}
\end{subfigure}
\begin{subfigure}[b]{0.49\textwidth}\centering
\includegraphics[width=\textwidth,clip,trim=0pt 10pt 10pt 50pt]{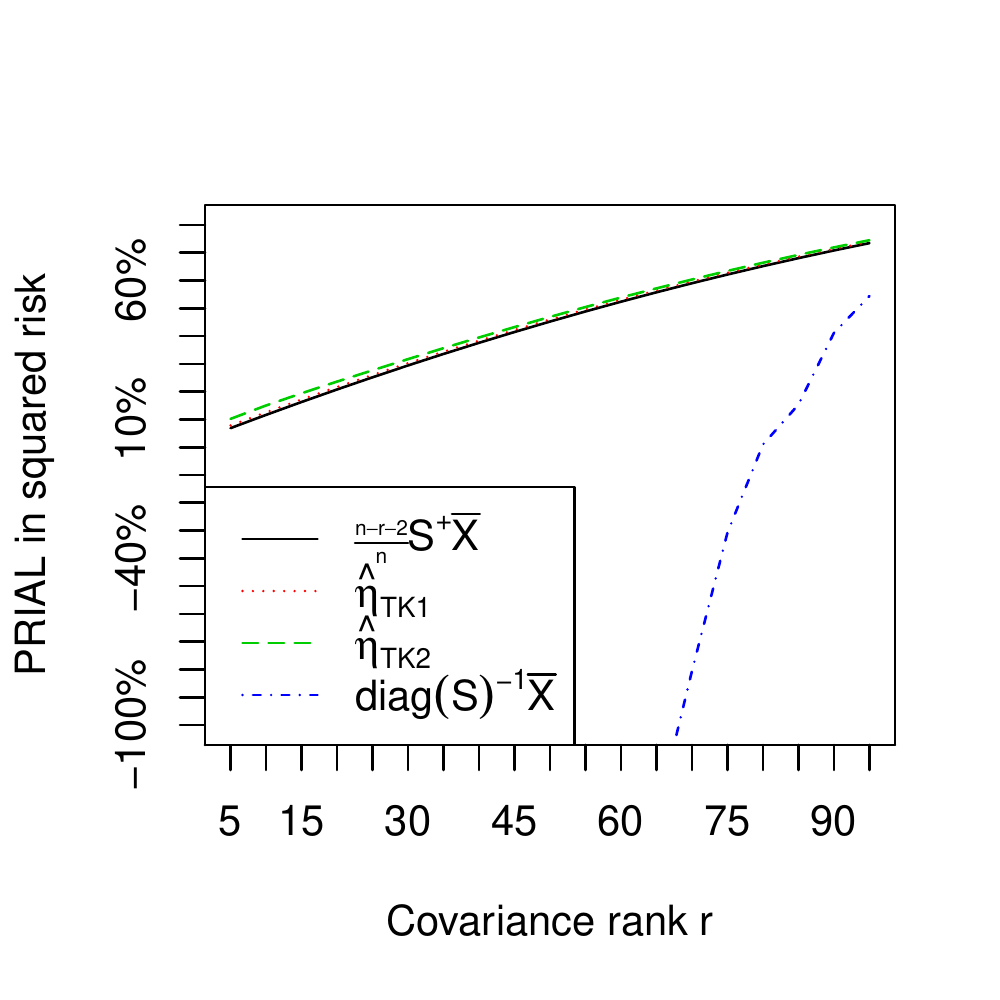}
\caption{n=200 and p=100}
\label{fig:disc-b}
\end{subfigure}
\\
\begin{subfigure}[b]{0.49\textwidth}\centering
\includegraphics[width=\textwidth,clip,trim=0pt 10pt 10pt 50pt]{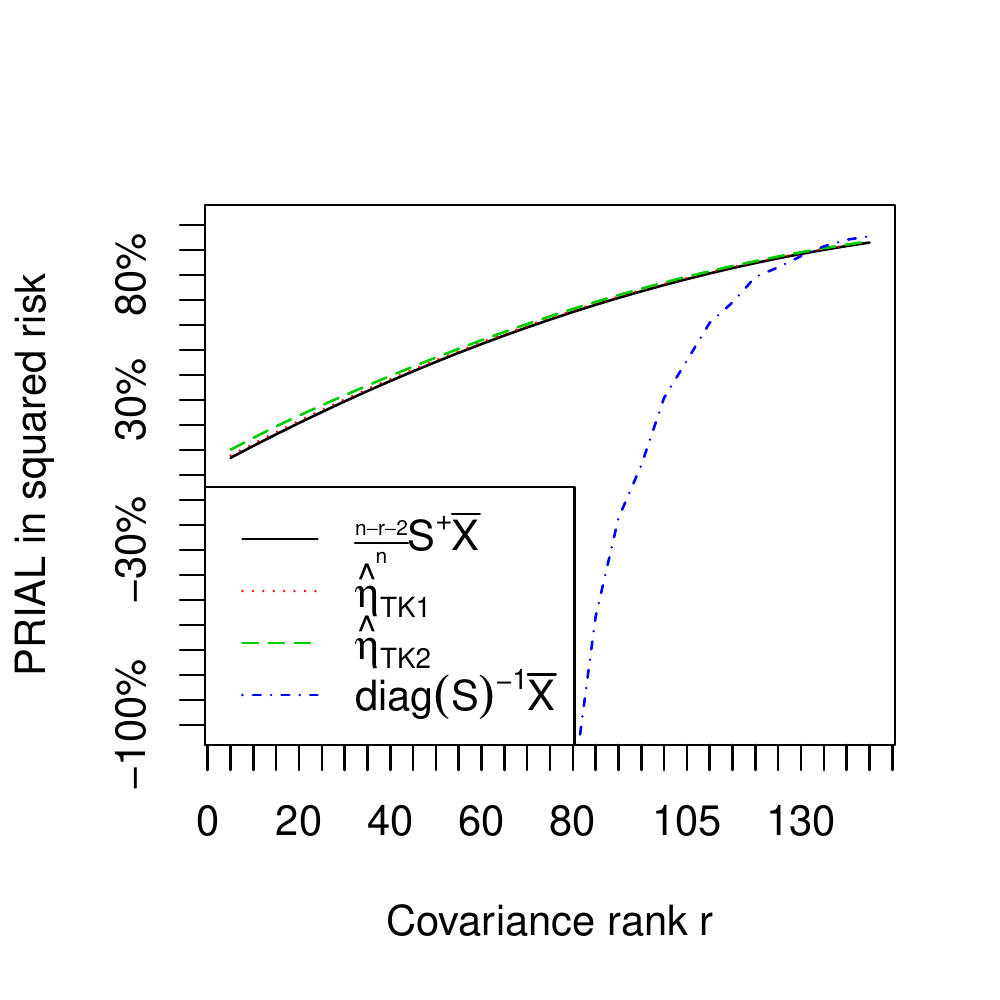}
\caption{n=200 and p=150}
\label{fig:disc-c}
\end{subfigure}
\begin{subfigure}[b]{0.49\textwidth}\centering
\includegraphics[width=\textwidth,clip,trim=0pt 10pt 10pt 50pt]{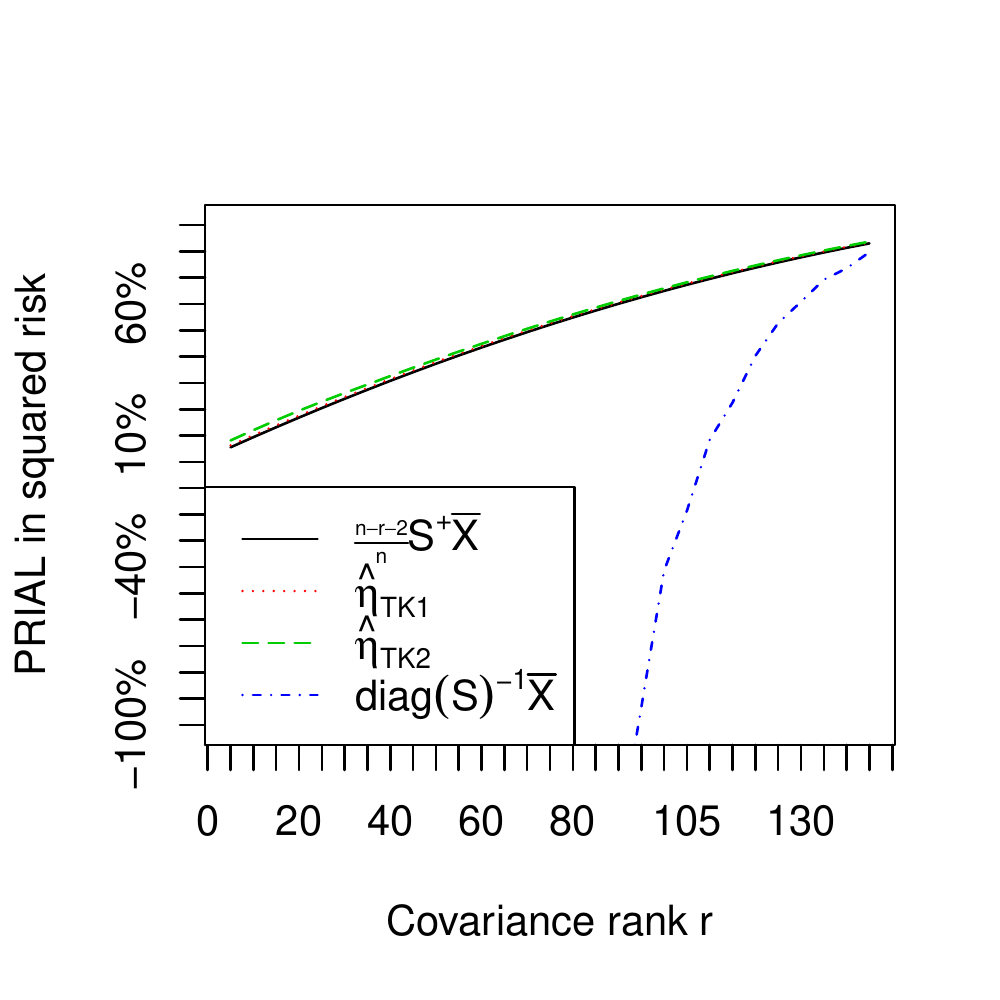}
\caption{n=250 and p=150}
\label{fig:disc-d}
\end{subfigure}
\caption{PRIAL of $\frac{n-r-2}nS^+\bar X$, $\hat{\eta}^+_{\text{TK}1}$, $\hat{\eta}^+_{\text{TK}2}$ and $\text{diag}(S)^{-1}\bar X$ with respect to $S^+\bar X$ for estimating $\eta=\Sigma^+\mu$ in squared loss.}
\label{fig:disc}
\end{figure}

\subsection{NASDAQ-100 simulation}

To explore more realistic designs than an autoregressive covariance matrix, we also considered a setting where the true covariance matrix was constructed from real data.

The NASDAQ-100 is a stock market index composed of the hundred largest non-financial companies on the NASDAQ. As of 2015, this is composed of 107 securities, since some companies offer several classes of stock. We computed the net daily returns of these assets up to March 6, 2015. The newest security is Liberty Media Corp Series C (LMCK), which was issued to series A and B shareholders as dividend on July 7, 2014. To avoid missing data issues, we took this date as the initial time point. This yielded a sample size of 167 trading days. From this data we computed a $107\times107$ sample covariance matrix of the NASDAQ-100 returns.

We then proceeded with the risk simulation as follows. For every $r$ from 1 to $(n-4)\wedge p$, the true covariance matrix $\Sigma$ was defined as the NASDAQ-100 sample covariance matrix with its $p-r$ smallest eigenvalues set to zero. We then randomly generated $1,000$ replications from a multivariate normal distribution with mean $\mu=(1,\dots,1)$ and singular covariance $\Sigma$, and computed the resulting sample covariance matrix $S=X^tX/n$.

For each of the three estimation problems, we computed the PRIAL as in Subsection \ref{subsec:AR}. The simulation results are given in Figure \ref{fig:NASDAQ}. The results appear similar to the singularized autoregressive setting.

\begin{figure}[ht]\centering
\begin{subfigure}[b]{0.49\textwidth}\centering
\includegraphics[width=\textwidth,clip,trim=0pt 10pt 10pt 50pt]{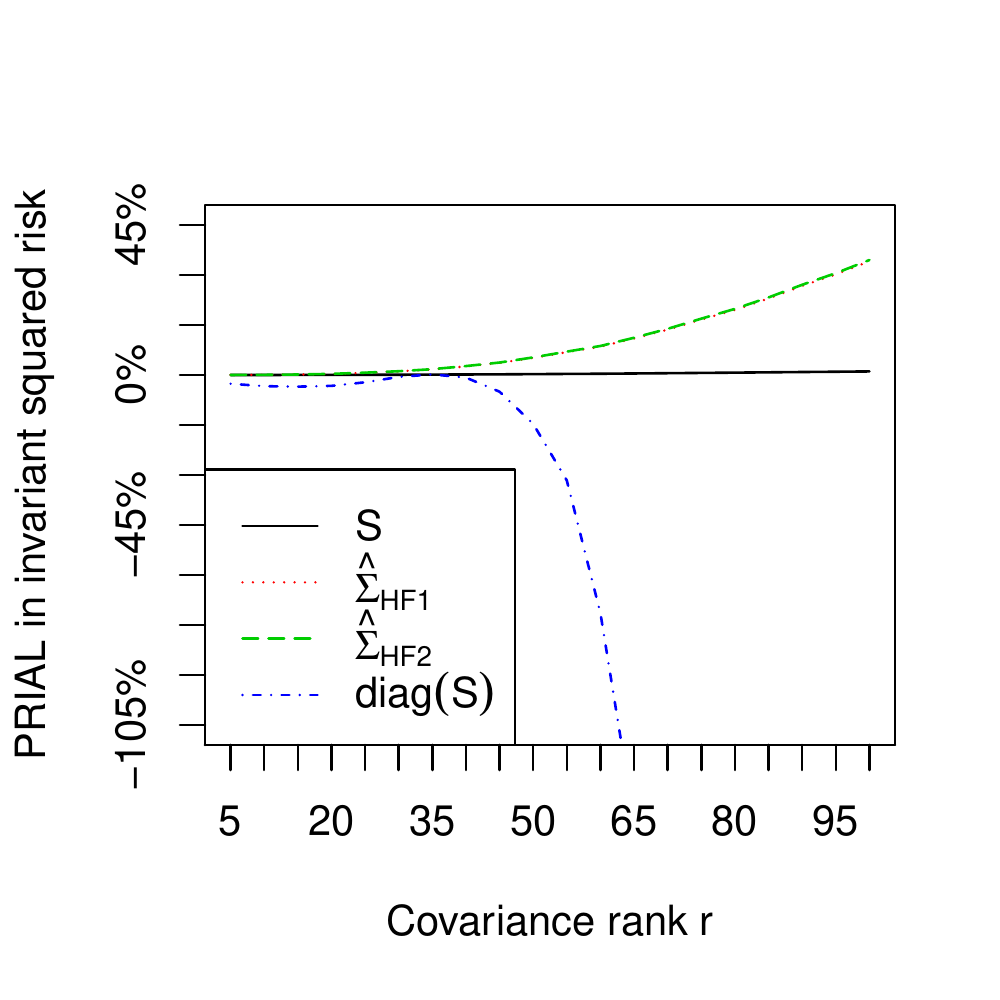}
\caption{Covariance matrix estimation}
\label{fig:NASDAQ-a}
\end{subfigure}
\begin{subfigure}[b]{0.49\textwidth}\centering
\includegraphics[width=\textwidth,clip,trim=0pt 10pt 10pt 50pt]{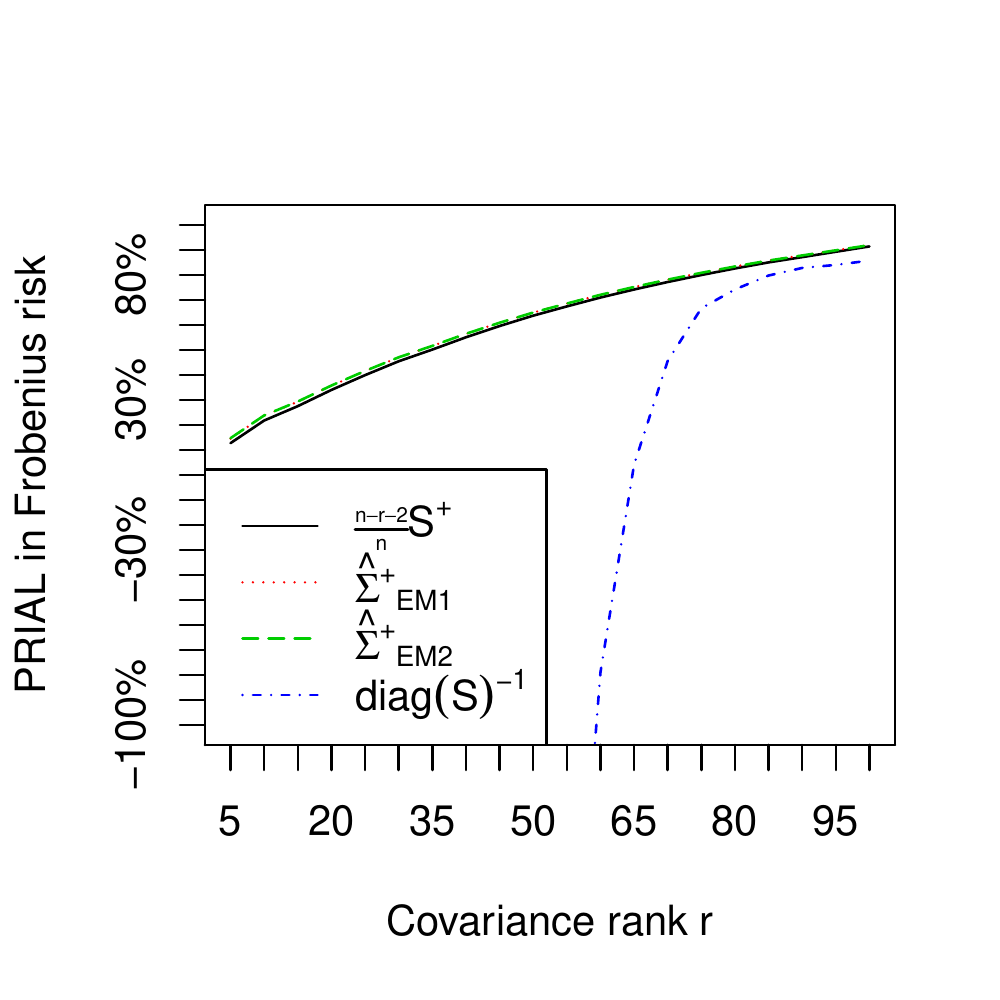}
\caption{Precision matrix estimation}
\label{fig:NASDAQ-b}
\end{subfigure}
\\
\begin{subfigure}[b]{0.49\textwidth}\centering
\includegraphics[width=\textwidth,clip,trim=0pt 10pt 10pt 50pt]{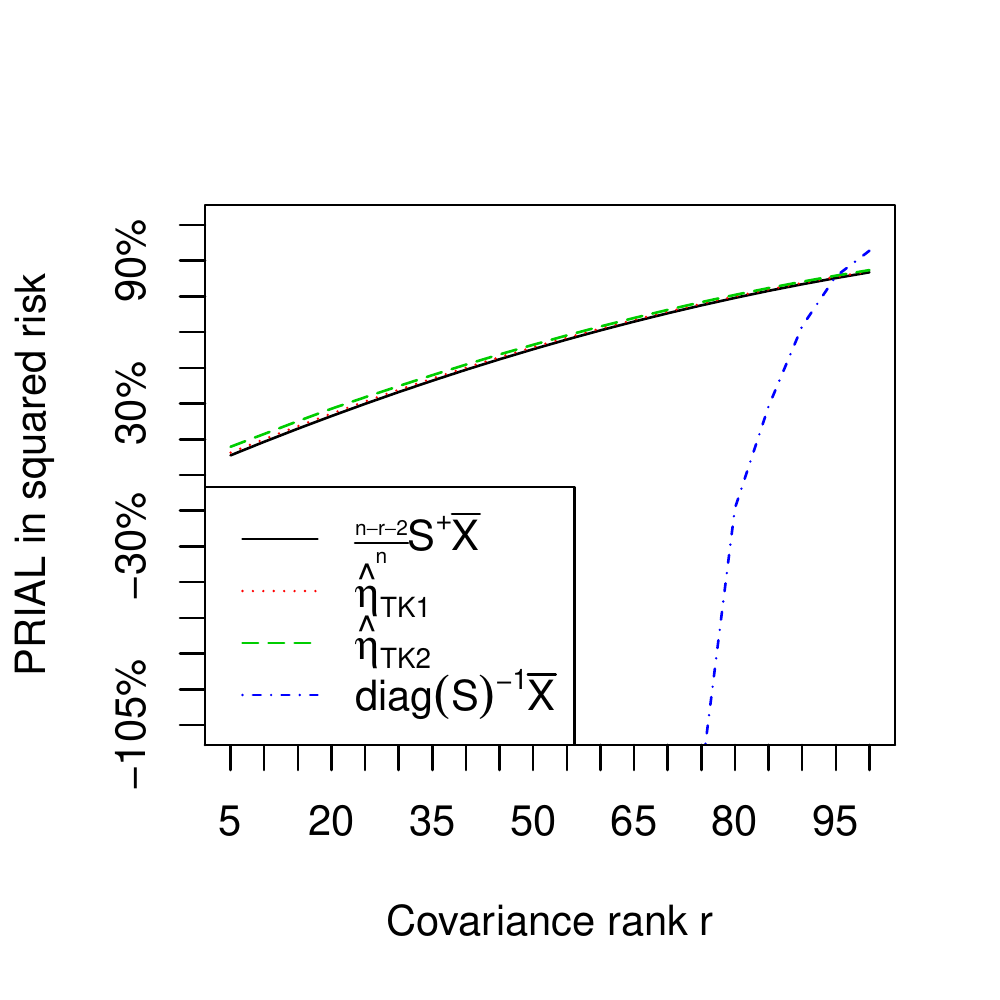}
\caption{Discriminant coefficient estimation}
\label{fig:NASDAQ-c}
\end{subfigure}
\caption{PRIAL for the singularized NASDAQ-100 covariance matrix in the three estimation tasks.}
\label{fig:NASDAQ}
\end{figure}

\section{Discussion}

An application of the \cite{TsukumaKubokawa14} technique developed in Subsection \ref{subsec:model} allows in essence to reduce the dimension from $p$ to $r$. Since $r<\min(n,p)$, this in effect turns the problem into a classical setting where the sample size is greater than the dimension, and allows for classical proof techniques to be applied.

An interesting extension is the setting where $n\leq r<p$. In that case, an adaptation of the method would yield a high-dimensional context where the true covariance matrix is full rank, but the sample size $n$ is still smaller than the dimension $p$. Recent work, for example by \citet{Konno09}, could allow the construction of improved estimators analogous to the ones presented in this article.

Recent attention has been given to the notion of the effective rank of a matrix $r(A)=\Tr(A)/\|A\|_2$, developed by \cite{Vershynin10}and applied in the study of spiked covariance matrices in \cite{Bunea12}. Singular covariance matrices can be regarded as a boundary case of spiked matrices where the noise equals zero. In that regard, it is interesting to notice that the quantity $\Tr(S^{+2})/\Tr^2(S^+)$ that appear in inequality  \eqref{eq:cov-aS+tTr-risk} is related to the effective rank of $S^+$ through the inequality
\begin{align*}
\frac{\Tr(S^{+2})}{\Tr^2(S^+)}\leq
r(S^+)
\leq r^2\frac{\Tr(S^{+2})}{\Tr^2(S^+)}.
\end{align*}
The presence of this quantity is likely connected to the orthogonal invariance of the loss function.

The some of the key results in \cite{bickel2004} can be extended to the case where $\rk{\Sigma} = r< p$.  Suppose $\Sigma$ is known and let $e_1$ and $e_2$  equal the limiting Bayes risk of the classification rule using $\Sigma^+$ and ${\diag} (\Sigma)$, respectively.   By an application of an extended Kantorovich inequality for generalized inverses, developed by \cite{Liu97}, it can be shown that $e_2 \leq {\bar \Phi}(\frac{2\sqrt \kappa_r}{1+\kappa_r} {\bar \Phi}(e_1))$, where ${\bar \Phi}$ is the Gaussian survival function and $\kappa_r = \lambda_1 / \lambda_r$ with $\lambda_1 > \lambda_2 >  \cdots > \lambda_r$ the non-zero eigenvalues of $({\diag}( \Sigma ))^{-1}\Sigma$. In the setting of \cite{bickel2004}, $\Sigma$ is assumed to be full rank so that the limiting Bayes risk of the classification rule using $\diag (\Sigma) $ is close to optimal.  However, in the rank deficient case $\kappa_q = \infty$ for $q > r$, which implies that the diagonal rule give rise to a procedure that is no better than random guessing, that is with $e_2=1/2$.  This behavior is evident from Figures  \ref{fig:disc} and \ref{fig:NASDAQ}. In the case where the rank is close to $p$, the risk of the diagonal based discriminant estimator is close to the improved estimates, however, as the rank of $\Sigma$ declines from $p$ the risk properties of the diagonal based discriminant estimator become inferior.

Finally, in applications where a singular covariance matrix is unlikely but a low-dimensional approximation is desired, it might be beneficial to use one of the estimators proposed in this article and cross-validate the rank $r$ on the task to accomplish. For example, a mean-variance portfolio optimization problem could use $\hat{\Sigma}^+_\text{EM2}$ as precision matrix estimate, with rank $r$ cross-validated on some validation set. To the best of our knowledge, this methodology has no theoretical grounding but might nevertheless prove useful in some high-dimensional problems.

\section{Proofs}\label{sec:proofs}

\subsection{Preliminaries}\label{subsec:prelim} Before presenting the proofs of the statements from Section \ref{sec:results}, we explain the techniques employed by \citet{TsukumaKubokawa14} to work around the singularity of the covariates in the model. Define the sample mean and covariance matrix to be
\begin{align*}
\begin{array}{l l}
\bar X=X'1_n/n   &\sim\text{N}_p(\mu,\Sigma/n),
\\
S=[X-1_n\bar X']'[X-1_n\bar X']/n  &\sim\text{W}_p(n-1,\Sigma/n).
\end{array}
\end{align*}

Since $\Sigma$ has rank $r$, we can factorize it as $\Sigma=BB^t$ for some full rank $p\times r$ matrix $B$. Let $H=B(B^tB)^{-1/2}$ and $\Omega=B^tB$ - then $H$ is $p\times r$ semi-orthogonal $H^tH=I_r$ and $HH^t=\Sigma\Sigma^+$, $\Omega$ is $r\times r$ invertible, $\Sigma=H\Omega H^t$ and $\Sigma^+=H\Omega^{-1}H^t$. Since $\Sigma$ is rank deficient, there must be a $Z\sim\text{N}_{n,r}(0,I_r)$ such that $X=1_n\mu^t+ZB^t$, and therefore we can write $X=1_n\mu^t+Z(B^tB)^{1/2}(B^tB)^{-1/2}B^t=1_n\mu^t+YH^t$ for $Y=Z\Omega^{-1/2}\sim N_{n,r}(0,\Omega)$. Define then
\begin{align*}
\begin{array}{l l}
\bar Y=Y^t1_n/n&\sim\text{N}_r(0,\Omega/n),
\\
T=[Y-1_n\bar Y^t]'[Y-1_n\bar Y^t]/n&\sim\text{W}_r(n-1,\Omega/n).
\end{array}
\end{align*}
Notice how $T$ is full rank, since $r\leq n-1$, in contrast with $S$. Using $X=1_n\mu^t+YH^t$, we can see that these constructions are related to $\bar X$ and $S$ through
\begin{align*}
\bar X &= \mu+H\bar Y,
\hspace{60pt}
S = HTH^t.
\end{align*}

Recall that $SS^+=\Sigma\Sigma^+$ almost surely, from Equation \eqref{eq:SSSS}. Since $S$ has rank $r<p$, there must be a $p\times r$ semi-orthogonal matrix $O_1$ such that $O_1^tO_1=I_r$, $O_1O_1^t=\Sigma\Sigma^+$ almost surely and $S=O_1LO_1^t$ for $L=\text{diag}(\lambda_1(S),...,\lambda_r(S))$. The $r\times r$ matrix $U=H^tO_1$ is easily seen to be orthogonal, and so by $T=H^tSH=H^tO_1LO_1H^t=ULU^t$, we see that $T$ and $S$ must share the same $r$ non-zero eigenvalues, i.e. $\lambda_i(S)=\lambda_i(T)$.

These constructions and facts form the basis of our risk estimation procedures and the notation will be repeatedly used in the following subsections.

\subsection{Proofs of Subsection \ref{subsec:cov}}
\begin{proof}[Proof of Theorem \ref{thm:cov-ure}] Since $T$ and $S$ share the same non-zero eigenvalues, we can regard $\Psi$ as a function of $T\sim\text{W}_r(n-1,\Omega/n)$ only.
Since $r\leq n-1$ and $\Omega$ is full rank, we can apply Lemma 1 and 2 of \cite{ChetelatWells14}  to $H^t\hat{\Sigma}H=U\Psi U^t$. On that result, one can also consult \citet[Theorem 4.1]{Sheena95}, and in the singular case \citet[Proposition 2.1]{KubokawaSrivastava08} and \citet[Theorem 2.4]{Konno09}. In any case, we get
{\setlength{\mathindent}{5pt}\begin{align*}&
\E{\Tr\Big(\big[\hat\Sigma\Sigma^+-I_p\big]^2\Big)}
=\E{p-2\tr{\Sigma^+\hat\Sigma}+\tr{\Sigma^+\hat\Sigma\Sigma^+\hat\Sigma}}
\\&\qquad
=\E{(p-r)+r-2\tr{\Omega^{-1}U\Psi U'}+\tr{\Omega^{-1}U\Psi U'\Omega^{-1}U\Psi U'}}
\\&\qquad
=\E{p-r+\Tr\Big(\big[U\Psi U'\Omega^{-1}-I_r\big]^2\Big)}
\\&\qquad
=\E{(p-r)+r
+\frac{n-r-2}{n}\sum_{k=1}^r\frac{\psi^*_k}{l_k}
+\frac{2}{n}\sum_{k=1}^r\frac{\partial\psi^*_k}{\partial l_k}
+\frac1{n}\sum_{k\not= b}^r\frac{\psi^*_k-\psi^*_b}{l_k-l_b}
},
\end{align*}}
under the regularity conditions
{\setlength{\mathindent}{5pt}\begin{align*}
&\E{\left|\sum_{k=1}^r \frac{n-r-2}n\frac{\psi_k}{l_k}
+\frac2n\sum_{k=1}^r\frac{\partial \psi_k}{\partial l_k}
+\frac1n\sum_{k\neq b}^r\frac{\psi_k-\psi_b}{l_k-l_b}\right|}<\infty,
\\&
\E{\left|
\sum_{k=1}^r\frac{n-r-2}n\frac{\psi_k^*+2\psi_k}{l_k}
+\frac2n\sum_{k=1}^r\frac{\partial \psi_k^*+2\psi_k}{\partial l_k}
\right.\right.\\&\hspace{20pt}\left.\left.
+\frac1n\sum_{k\neq b}^r\frac{\psi_k^*+2\psi_k-\psi_b^*-2\psi_b}{l_k-l_b}
\right|}<\infty
\text{ and }
\E{\sum_{k=1}^r\left|\frac{\psi^*_k+2\psi_k}{l_k}\right|^2}<\infty.
\end{align*}}
But these are satisfied by Inequalities \eqref{eq:cov-ure-conditions}. This concludes the proof.
\end{proof}

\begin{proof}[Proof of Proposition \ref{prop:cov-aS}] Let us apply the results of Theorem \ref{thm:cov-ure}. We have $\psi_k=a l_k$, so
\begin{align*}
\psi^*_k&=\left[\frac{n-r-2}na+\frac4na+\frac2na(r-1)-2\right]al_k
\\&
=\left[\frac{n+r}{n}a-2\right]al_k.
\end{align*}
Then the unbiased risk estimator \eqref{eq:cov-ure-ure} equals
{\setlength{\mathindent}{20pt}\begin{align*}
U&=p
+\frac{n-r-2}{n}\sum_{k=1}^r\frac{\psi^*_k}{l_k}
+\frac{2}{n}\sum_{k=1}^r\frac{\partial\psi^*_k}{\partial l_k}
+\frac1{n}\sum_{k\not= b}^r\frac{\psi^*_k-\psi^*_b}{l_k-l_b}
\\&
=p
+\frac{n-r-2}{n}\left[\frac{n+r}{n}a-2\right]ar
+\frac{2}{n}\left[\frac{n+r}{n}a-2\right]ar
\\&\hspace{40pt}
+\frac1{n}\left[\frac{n+r}{n}a-2\right]ar(r-1)
\\&
=p
-2\frac{(n-1)r}{n}a
+\frac{(n-1)(n+r)r}{n^2}a^2.
\end{align*}}
Clearly, $\E{\Big|U\Big|}=\Big|p-2\frac{(n-1)r}{n}a
+\frac{(n-1)(n+r)r}{n^2}a^2\Big|<\infty$. Similarly,
{\setlength{\mathindent}{20pt}\begin{align*}&
\E{\left|p
+\sum_{k=1}^r\frac{n-r-2}n\frac{\psi_k}{l_k}
+\frac2n\sum_{k=1}^r\frac{\partial \psi_k}{\partial l_k}
+\frac1n\sum_{k\neq b}^r\frac{\psi_k-\psi_b}{l_k-l_b}
\right|}
\\&\qquad
=
\E{\left|p
+\frac{(n-r-2)r}na
+\frac{2r}na
+\frac{r(r-1)}na
\right|}
\\&\qquad
=\left|p+\frac{(n-1)r}na\right|<\infty,
\\&
\E{\sum_{k=1}^r\left|\frac{\psi^*_k}{l_k}\right|^2}
=r\left[\frac{n+r}{n}a-2\right]^2a^2<\infty,
\quad
\E{\sum_{k=1}^r\left|\frac{\psi_k}{l_k}\right|^2}
=ra^2<\infty.
\end{align*}}
Thus the regularity conditions of Theorem \ref{thm:cov-ure} are satisfied and
{\setlength{\mathindent}{20pt}\begin{align*}
\E{\Tr\Big(\big[\hat\Sigma\Sigma^+-I_p\big]^2\Big)}=\E{U}
=p
-2\frac{(n-1)r}{n}a
+\frac{(n-1)(n+r)r}{n^2}a^2.
\end{align*}}
But this is minimized when $a=\frac{n}{n+r}$. In particular, notice that since $n\geq r+1=2$,
{\setlength{\mathindent}{20pt}\begin{align*}
\E{\Tr\Big(\big[\hat\Sigma_\text{HF1}\Sigma^+-I_p\big]^2\Big)}
&=p-\frac{(n-1)r}{n+r}
\\&\hspace{-20pt}
<
p-\frac{(n-r)(n-1)r}{n^2}
=
\E{\Tr\Big(\big[S\Sigma^+-I_p\big]^2\Big)}
\\&\hspace{-20pt}
<
p-\frac{(n-r-2)r}{n-1}
=
\E{\Tr\Big(\big[\frac{n}{n-1}S\Sigma^+-I_p\big]^2\Big)},
\end{align*}}
so $\hat\Sigma_\text{HF1}$ dominates $S$, which dominates $\frac{n}{n-1}S$, as desired.
\end{proof}

\begin{proof}[Proof of Proposition \ref{prop:cov-aS+tTr}] Again, let us apply the results of Theorem \ref{thm:cov-ure}. Here $\psi_k = \frac{n}{n+r}[l_k+t/\text{tr}(S^+)]$,
so using that $\frac{\partial}{\partial l_k}\frac1{\text{tr}(S^+)}=\frac1{l^2_k\Tr^2(S^+)}$ we find
{\setlength{\mathindent}{5pt}\begin{align*}&
\psi^*_k
=\left[\frac{n-r-2}n\frac{\psi_k}{l_k}
+\frac4n\frac{\partial \psi_k}{\partial l_k}
+\frac2n\sum_{b\neq k}^r\frac{\psi_k-\psi_b}{l_k-l_b}-2\right]\psi_k
\\&\quad
=\frac{n}{n+r}\bigg[
\frac{n-r-2}{n+r}\left(1+\frac{t}{l_k\text{tr}\,(S^+)}\right)
+\frac4{n+r}\left(1+\frac{t}{l_k^2\text{tr}^2(S^+)}\right)
\\&\hspace{100pt}
+\frac{2(r-1)}{n+r}-2\bigg]
\cdot\left[l_k+\frac{t}{\text{tr}(S^+)}\right]
\\&\quad
=\frac{n}{n+r}\left[1
+\frac{n-r-2}{n+r}\frac{t}{l_k\text{tr}\,(S^+)}
+\frac4{n+r}\frac{t}{l_k^2\text{tr}^2(S^+)}
-2
\right]
\left[l_k+\frac{t}{\text{tr}(S^+)}\right]
\\&\quad
=-\frac{n}{n+r}l_k
+\bigg[
-2\frac{r+1}{n+r}\frac1{\text{tr}\,(S^+)}
+\frac4{n+r}\frac1{l_k\text{tr}^2(S^+)}
\bigg]\frac{nt}{n+r}
\\&\quad\qquad
+\bigg[\frac{n-r-2}{n+r}\frac1{l_k\text{tr}^2(S^+)}
+\frac4{n+r}\frac1{l_k^2\text{tr}^3(S^+)}
\bigg]\frac{nt^2}{n+r}.
\end{align*}}
Let us now compute the terms in the URE. We find for the first term:
{\setlength{\mathindent}{5pt}\begin{align*}&
\frac{n-r-2}{n}\sum_{k=1}^r\frac{\psi^*_k}{l_k}
=-\frac{n-r-2}{n}\sum_{k=1}^r\frac{n}{n+r}
\\&\quad\qquad
+\frac{n-r-2}{n}\sum_{k=1}^r\bigg[-2\frac{r+1}{n+r}\frac1{l_k\text{tr}\,(S^+)}
+\frac4{n+r}\frac1{l^2_k\text{tr}^2(S^+)}
\bigg]\frac{nt}{n+r}
\\&\quad\qquad
+\frac{n-r-2}{n}\sum_{k=1}^r\bigg[\frac{n-r-2}{n+r}\frac1{l^2_k\text{tr}^2(S^+)}
+\frac4{n+r}\frac1{l_k^3\text{tr}^3(S^+)}
\bigg]\frac{nt^2}{n+r}
\\&\quad
=-\frac{n-r-2}{n+r}r
+\frac{n-r-2}{n+r}\bigg[
-2\frac{r+1}{n+r}
+\frac4{n+r}\frac{\Tr(S^{+2})}{\text{tr}^2(S^+)}
\bigg]t
\\&\quad\qquad
+\frac{n-r-2}{n+r}\bigg[\frac{n-r-2}{n+r}\frac{\Tr(S^{+2})}{\text{tr}^2(S^+)}
+\frac4{n+r}\frac{\Tr(S^{+3})}{\text{tr}^3(S^+)}
\bigg]t^2.
\end{align*}}
Next, using the fact that
$\frac{\partial}{\partial l_k}\frac1{l_k\text{tr}^2(S^+)}
=-\frac1{l_k^2\text{tr}^2(S^+)}+\frac2{l_k^3\text{tr}^3(S^+)}$
and that
$\frac{\partial}{\partial l_k}\frac1{l_k^2\text{tr}^3(S^+)}
=-\frac2{l_k^3\text{tr}^3(S^+)}+\frac3{l_k^4\text{tr}^4(S^+)}$,
 we find
{\setlength{\mathindent}{5pt}\begin{align*}&
\frac{2}{n}\sum_{k=1}^r\frac{\partial\psi^*_k}{\partial l_k}
=-\frac{2}{n}\sum_{k=1}^r\frac{\partial}{\partial l_k}\frac{n}{n+r}l_k
\\&\quad\qquad
+\frac{2}{n}\sum_{k=1}^r\frac{\partial}{\partial l_k}
\bigg[-2\frac{r+1}{n+r}\frac1{\text{tr}\,(S^+)}
+\frac4{n+r}\frac1{l_k\text{tr}^2(S^+)}
\bigg]\frac{nt}{n+r}
\\&\quad\qquad
+\frac{2}{n}\sum_{k=1}^r\frac{\partial}{\partial l_k}
\bigg[\frac{n-r-2}{n+r}\frac1{l_k\text{tr}^2(S^+)}
+\frac4{n+r}\frac1{l_k^2\text{tr}^3(S^+)}
\bigg]\frac{nt^2}{n+r}
\\&
=-\frac{2}{n}\sum_{k=1}^r\frac{n}{n+r}
+\frac{2}{n}\sum_{k=1}^r
\bigg[-2\frac{r+1}{n+r}\frac1{l^2_k\text{tr}^2\,(S^+)}
-\frac4{n+r}\frac1{l_k^2\text{tr}^2(S^+)}
\\&\quad\qquad
+\frac4{n+r}\frac2{l_k^3\text{tr}^3(S^+)}
\bigg]\frac{nt}{n+r}
+\frac{2}{n}\sum_{k=1}^r
\bigg[-\frac{n-r-2}{n+r}\frac1{l_k^2\text{tr}^2(S^+)}
\\&\quad\qquad
+\frac{n-r-2}{n+r}\frac2{l_k^3\text{tr}^3(S^+)}
-\frac4{n+r}\frac2{l_k^3\text{tr}^3(S^+)}
+\frac4{n+r}\frac3{l_k^4\text{tr}^4(S^+)}
\bigg]\frac{nt^2}{n+r}
\\&
=-\frac{2r}{n+r}
+\frac{2}{n+r}
\bigg[
-2\frac{r+3}{n+r}\frac{\Tr(S^{+2})}{\text{tr}^2\,(S^+)}
+\frac8{n+r}\frac{\Tr(S^{+3})}{\text{tr}^3\,(S^+)}\bigg]t
\\&\;\;
+\frac{2}{n+r}
\bigg[\!-\!\frac{n-r-2}{n+r}\frac{\Tr(S^{+2})}{\text{tr}^2\,(S^+)}
+2\frac{n-r-6}{n+r}\frac{\Tr(S^{+3})}{\text{tr}^3\,(S^+)}
+\frac{12}{n+r}\frac{\Tr(S^{+4})}{\text{tr}^4\,(S^+)}
\bigg]t^2.
\end{align*}}
Finally, using that $\sum_{k\not=b}^r\frac{l_k^{-1}-l_b^{-1}}{l_k-l_b}\leq0$ and $\sum_{k\not=b}^r\frac{l_k^{-2}-l_b^{-2}}{l_k-l_b}\leq0$ we can bound
{\setlength{\mathindent}{5pt}\begin{align*}&
\frac1{n}\sum_{k\not= b}^r\frac{\psi^*_k-\psi^*_b}{l_k-l_b}
=-\frac1{n}\sum_{k\not= b}^r\frac{n}{n+r}
+\frac1{n}
\bigg[
\frac4{n+r}\frac1{\text{tr}^2(S^+)}
\sum_{k\not=b}^r\frac{l_k^{-1}-l_b^{-1}}{l_k-l_b}
\bigg]\frac{nt}{n+r}
\\&\;\;
+\!\frac1{n}
\bigg[\frac{n-r-2}{n+r}\frac1{\text{tr}^2(S^+)}
\sum_{k\not=b}^r\frac{l_k^{-1}-l_b^{-1}}{l_k-l_b}
\!+\!\frac4{n+r}\frac1{\text{tr}^3(S^+)}
\sum_{k\not=b}^r\frac{l_k^{-2}-l_b^{-2}}{l_k-l_b}
\bigg]\frac{nt^2}{n+r}
\\&\quad
\leq\;-\frac{r(r-1)}{n+r}.
\end{align*}}
Hence the URE \eqref{eq:cov-ure-ure} equals
{\setlength{\mathindent}{5pt}\begin{align*}&
U=\;p
+\frac{n-r-2}{n}\sum_{k=1}^r\frac{\psi^*_k}{l_k}
+\frac{2}{n}\sum_{k=1}^r\frac{\partial\psi^*_k}{\partial l_k}
+\frac1{n}\sum_{k\not= b}^r\frac{\psi^*_k-\psi^*_b}{l_k-l_b}
\\&
\leq\;
p-\frac{n-r-2}{n+r}r
-\frac{2}{n+r}r
-\frac{r-1}{n+r}r
\\&\quad
+\frac{n-r-2}{n+r}\bigg[
-2\frac{r+1}{n+r}
+\frac4{n+r}\frac{\Tr(S^{+2})}{\text{tr}^2(S^+)}
\bigg]t
\\&\quad
+\frac{2}{n+r}
\bigg[
-2\frac{r+3}{n+r}\frac{\Tr(S^{+2})}{\text{tr}^2\,(S^+)}
+\frac8{n+r}\frac{\Tr(S^{+3})}{\text{tr}^3\,(S^+)}\bigg]t
\\&\quad
+\frac{n-r-2}{n+r}\bigg[\frac{n-r-2}{n+r}\frac{\Tr(S^{+2})}{\text{tr}^2(S^+)}
+\frac4{n+r}\frac{\Tr(S^{+3})}{\text{tr}^3(S^+)}
\bigg]t^2
\\&\quad
+\frac{2}{n+r}
\bigg[\!-\!\frac{n-r-2}{n+r}\frac{\Tr(S^{+2})}{\text{tr}^2\,(S^+)}
\!+2\frac{n-r-6}{n+r}\frac{\Tr(S^{+3})}{\text{tr}^3\,(S^+)}
\!+\frac{12}{n+r}\frac{\Tr(S^{+4})}{\text{tr}^4\,(S^+)}
\bigg]t^2
\\&\quad
=\;
p-\frac{(n-1)r}{n+r}
+\bigg[
-2\frac{(n-r-2)(r+1)}{(n+r)^2}
+4\frac{n-2r-5}{(n+r)^2}\frac{\Tr(S^{+2})}{\text{tr}^2(S^+)}
\\&\quad\qquad
+\frac{16}{(n+r)^2}\frac{\Tr(S^{+3})}{\text{tr}^3\,(S^+)}\bigg]t
+\bigg[\frac{(n-r-2)(n-r-4)}{(n+r)^2}\frac{\Tr(S^{+2})}{\text{tr}^2(S^+)}
\\&\quad\qquad
+8\frac{n-r-4}{(n+r)^2}\frac{\Tr(S^{+3})}{\text{tr}^3(S^+)}
+\frac{24}{(n+r)^2}\frac{\Tr(S^{+4})}{\text{tr}^4\,(S^+)}
\bigg]t^2.
\end{align*}}
Now note that $\Tr(S^{+3})\leq\Tr^{\frac12}(S^{+4})\Tr^{\frac12}(S^{+2})\leq\Tr(S^{+2})\Tr(S^{+})$ and  $\Tr(S^{+4})\leq\Tr^{\frac12}(S^{+6})\Tr^{\frac12}(S^{+2})\leq\Tr(S^{+3})\Tr(S^{+})\leq\Tr(S^{+2})\Tr^2(S^{+})$. Then since $r\leq  n-4$ and $-1\leq -\frac{\Tr(S^{+2})}{\text{tr}^2(S^+)}$ we can write
{\setlength{\mathindent}{5pt}\begin{align}&
U
\leq\;
p-\!\frac{(n-1)r}{n+r}
+\bigg[
\!-\!2\frac{(n-r-2)(r+1)}{(n+r)^2}\frac{\Tr(S^{+2})}{\text{tr}^2(S^+)}
+4\frac{n-2r-5}{(n+r)^2}\frac{\Tr(S^{+2})}{\text{tr}^2(S^+)}
\notag\\&\quad\qquad
+\frac{16}{(n+r)^2}\frac{\Tr(S^{+2})}{\text{tr}^2(S^+)}\bigg]t
+\bigg[\frac{(n-r-2)(n-r-4)}{(n+r)^2}\frac{\Tr(S^{+2})}{\text{tr}^2(S^+)}
\notag\\&\quad\qquad
+8\frac{n-r-4}{(n+r)^2}\frac{\Tr(S^{+2})}{\text{tr}^2(S^+)}
+\frac{24}{(n+r)^2}\frac{\Tr(S^{+2})}{\text{tr}^2(S^+)}
\bigg]t^2
\notag\\&\;\;
\leq\;
p-\frac{(n-1)r}{n+r}
+\bigg[
\frac{(n-r)(n-r+2)}{(n+r)^2}t^2
-2\frac{(n-r)(r-1)}{(n+r)^2}t
\bigg]\frac{\Tr(S^{+2})}{\text{tr}^2(S^+)}.
\label{eq:cov-aS+tTr-U}
\end{align}}
Now, using that $\frac{\Tr(S^{+2})}{\text{tr}^2(S^+)},\frac{\Tr(S^{+3})}{\text{tr}^3(S^+)},\frac{\Tr(S^{+4})}{\text{tr}^4(S^+)}\leq1$ we find
{\setlength{\mathindent}{5pt}\begin{align*}&
\E{\left|p+\sum_{k=1}^r \frac{n-r-2}n\frac{\psi_k}{l_k}
+\frac2n\sum_{k=1}^r\frac{\partial \psi_k}{\partial l_k}
+\frac1n\sum_{k\neq b}^r\frac{\psi_k-\psi_b}{l_k-l_b}\right|}
\\&\quad
=\E{\left|p
+\frac{n-r-2}{n+r}\sum_{k=1}^r\left[1+\frac{t}{l_k\Tr(S^+)}\right]
\right.\right.\\&\hspace{140pt}\left.\left.
+\frac2{n+r}\sum_{k=1}^r\left[1+\frac{t}{l_k^2\Tr^2(S^+)}\right]
+\frac1{n+r}\sum_{k\neq b}^r1\right|}
\\&\quad
=\left|p
+\frac{n-r-2}{n+r}(r+t)
+\frac2{n+r}(r+t)
+\frac{r(r-1)}{n+r}\right|<\infty,
\\&
\E{\left|\sum_{k=1}^r\frac{\psi_k}{l_k}\right|}
=\E{\left|\frac{n}{n+r}\sum_{k=1}^r\left[1+\frac{t}{l_k\Tr(S^+)}\right]\right|}=\frac{n}{n+r}|r+t|<\infty,
\\&
\E{\left|\sum_{k=1}^r\frac{\psi^*_k}{l_k}\right|}
=\frac{n}{n+r}\E{\left|
-r
+\bigg[
-2\frac{r+1}{n+r}
+\frac4{n+r}\frac{\Tr(S^{+2})}{\text{tr}^2(S^+)}
\bigg]t
\right.\right.\\&\hspace{140pt}\left.\left.
+\bigg[
\frac{n}{n+r}\frac{\Tr(S^{+2})}{\text{tr}^2(S^+)}
+\frac4{n+r}\frac{\Tr(S^{+3})}{\text{tr}^3(S^+)}
\bigg]t^2
\right|}
\\&\hspace{40pt}
\leq\;\frac{n}{n+r}\bigg[r
+\bigg(
2\frac{r+1}{n+r}
+\frac4{n+r}
\bigg)|t|
+\bigg(
\frac{n}{n+r}
+\frac4{n+r}
\bigg)t^2
\bigg]<\infty
\end{align*}}
and by \eqref{eq:cov-aS+tTr-U}
{\setlength{\mathindent}{5pt}\begin{align*}&
\E{\Big|U\Big|}\leq p+\frac{(n-1)r}{n+r}
+\bigg[
\frac{(n-r)(n-r+2)}{(n+r)^2}t^2
+2\frac{(n-r)(r-1)}{(n+r)^2}t
\bigg]<\infty.
\end{align*}}
Thus all the regularity conditions of Theorem \ref{thm:cov-ure} are satisfied, and we find
{\setlength{\mathindent}{5pt}\begin{align*}&
\E{\Tr\Big(\big[\hat\Sigma_t\Sigma^+-I_p\big]^2\Big)}=\E{U}
\\&\quad
\leq\;
p-\!\frac{(n-1)r}{n+r}
\!+\!\bigg[
\frac{(n-r)(n-r+2)}{(n+r)^2}t^2
\!-\!2\frac{(n-r)(r-1)}{(n+r)^2}t
\bigg]\!\E{\frac{\Tr(S^{+2})}{\text{tr}^2(S^+)}}\!,
\end{align*}}
which proves inequality \eqref{eq:cov-aS+tTr-risk}. To minimize this upper bound, notice that since $\E{\frac{\Tr(S^{+2})}{\text{tr}^2(S^+)}}\geq0$, it is enough to minimize the quadratic coefficient $\frac{(n-r)(n-r+2)}{(n+r)^2}t^2
-2\frac{(n-r)(r-1)}{(n+r)^2}t$. This is achieved precisely when $t=\frac{r-1}{n-r+2}$. When $r>1$, this makes this quadratic coefficient strictly negative, which in view of Proposition \ref{prop:cov-aS} guarantees
\[\E{\Tr\Big(\big[\hat\Sigma_\text{HF2}\Sigma^+-I_p\big]^2\Big)}<p-\frac{(n-1)r}{n+r}=\E{\Tr\Big(\big[\hat\Sigma_\text{HF1}\Sigma^+-I_p\big]^2\Big)}.\]
Thus in this case $\hat\Sigma_\text{HF2}$ dominates $\hat\Sigma_\text{HF1}$, as desired.
\end{proof}

\subsection{Proofs of Subsection \ref{subsec:pres}}

\begin{proof}[Proof of Theorem \ref{thm:pres-ure}]
Since $T$ and $S$ share the same non-zero eigenvalues, we can regard $\Psi$ as a function of $T\sim\text{W}_r(n-1,\Omega/n)$ only.
Since $r\leq n-1$ and $\Omega$ is full rank we can apply Lemma 2.1 from \citet{Dey87}. However, the proposition is given without proof and, more importantly, without the implied regularity conditions that inevitably come from using Stein's and Haff's lemmas. For completeness, we therefore derive again this result in our context.
First, we can write
{\setlength{\mathindent}{5pt}\begin{align*}&
\E{\|O_1\Psi O_1^t-H\Omega^{-1}H^t\|_F^2}
=\E{\|U\Psi U^t-\Omega^{-1}\|_F^2}
\\&\quad
=\E{\tr{U\Psi^2 U^t}-2\tr{\Omega^{-1}U\Psi U^t}}+\tr{\Omega^{-2}}
\end{align*}}
By Lemma 3 of \citet{ChetelatWells14}, this equals
{\setlength{\mathindent}{5pt}\begin{align*}&
\;\;
=\E{\sum_{k=1}^r\psi_k^2
-\!2\bigg(\frac{n-r-2}{n}\sum_{k=1}^p
\!+\frac2n\sum_{k=1}^r\frac{\partial\psi_k}{\partial l_k}
\!+\!\frac1n\sum_{k\not=b}\frac{\psi_k-\psi_b}{l_k-l_b}\bigg)}
\!\!+\!\tr{\Omega^{-2}}
\\&\;\;
=\E{
\sum_{k=1}^r\psi_k^2
-\!2\frac{n-r-2}n\sum_{k=1}^r\frac{\psi_k}{l_k}
\!-\!\frac4n\sum_{k=1}^r\frac{\partial\psi_k}{\partial l_k}
\!-\!\frac2n\sum_{k\not=b}^r\frac{\psi_k-\psi_b}{l_k-l_b}
}
\!\!+\!\tr{\Omega^{-2}}
\end{align*}}
under the regularity condition
{\setlength{\mathindent}{5pt}\begin{align*}&
\quad
\E{\bigg|
\frac{n-r-2}{n}\sum_{k=1}^p\frac{\psi_k}{l_k}
+\frac2n\sum_{k=1}^r\frac{\partial\psi_k}{\partial l_k}
+\frac1n\sum_{k\not=b}\frac{\psi_k-\psi_b}{l_k-l_b}\bigg|}<\infty.
\end{align*}}
The result follows from the fact that $\tr{\Omega^{-2}}=\tr{H^tH\Omega^{-1}H^tH\Omega^{-1}}=\tr{\Sigma^{-2}}$.
\end{proof}

\begin{proof}[Proof of Proposition \ref{prop:pres-aS}]
We have $\psi_k=a/l_k$, so
\begin{align*}&
\sum_{k=1}^r\psi_k^2
=a^2\sum_{k=1}^r\frac1{l_k^2}
=a^2\Tr(S^{+2})
\\&
-2\frac{n-r-2}n\sum_{k=1}^r\frac{\psi_k}{l_k}
=-2\frac{n-r-2}na\sum_{k=1}^r\frac1{l_k^2}
=-2\frac{n-r-2}na\Tr(S^{+2})
\\&
-\frac4n\sum_{k=1}^r\frac{\partial\psi_k}{\partial l_k}
=-\frac4na\sum_{k=1}^r-\frac1{l^2_k}
=\frac4na\Tr(S^{+2})
\\&
-\frac2n\sum_{k\not=b}^r\frac{\psi_k-\psi_b}{l_k-l_b}
=-\frac2na\sum_{k\not=b}^r\frac{l_k^{-1}-l_b^{-1}}{l_k-l_b}
=\frac2na\Tr^2(S^+)-\frac2na\Tr(S^{+2}).
\end{align*}
Summing everything, we get the URE
\begin{align*}&
U
=\frac2na\Tr^2(S^+)
+\bigg(a^2
-2\frac{n-r-3}na\bigg)\Tr(S^{+2}).
\end{align*}
Now notice that
{\setlength{\mathindent}{5pt}\begin{align*}&
\quad
\E{\bigg|\frac{n-r-2}{n}\sum_{k=1}^p\frac{\psi_k}{l_k}
+\frac2n\sum_{k=1}^r\frac{\partial\psi_k}{\partial l_k}
+\frac1n\sum_{k\not=b}\frac{\psi_k-\psi_b}{l_k-l_b}\bigg|}
\\&\qquad
=|a|\E{\bigg|\frac{n-r-3}n\Tr(S^{+2})-\frac1n\Tr^2(S^+)\bigg|}
\\&\qquad
\leq\frac{n-r-3}n\left|a\right|\E{\Tr(S^{+2})}+\frac1n|a|\E{\Tr^2(S^+)}.
\end{align*}}
Since $T\sim\text{W}_r(n-1,\Omega/n)$, by Theorem 2.4.14 (viii) from \citet{KolloVonRosen06} we have the bound
\begin{align}
\E{\Tr(S^{+2})}\leq\E{\Tr^2(S^{+})}=\E{\Tr^2(T^{-1})}<\infty
\label{eq:pres-aS-inf}
\end{align}
when $n-r-4>0$, which holds since $r\leq n-5$. Therefore, the regularity condition hold and we can apply Theorem \ref{thm:pres-ure} to conclude that
\begin{align*}&
\E{\|aS^+-\Sigma^+\|_F^2}
=\E{U}
\\&\qquad
=\frac2na\E{\Tr^2(S^+)}
+\bigg(a^2
-2\frac{n-r-3}na\bigg)\E{\Tr(S^{+2})}
\end{align*}
for any $a\in\R$. Thus, in particular, the risk of the unbiased estimator $\frac{n-r-2}nS$ must equal
$\frac{2(n-r-2)}{n^2}\E{\Tr^2(S^+)}-\frac{(n-r-2)(n-r-4)}{n^2}\E{\Tr(S^{+2})}$. When $a\leq\frac{n-r-2}n$ we can bound
{\setlength{\mathindent}{5pt}\begin{align*}&
\E{\|aS^+-\Sigma^+\|_F^2}-\E{\Big\|\frac{n-r-2}nS^+-\Sigma^+\Big\|_F^2}
\\&\qquad
=\;
\frac2n\left(a-\frac{n-r-2}{n}\right)\E{\Tr^2(S^+)}
\\&\qquad\qquad
+\bigg(a^2
-2\frac{n-r-3}na
+\frac{(n-r-2)(n-r-4)}{n^2}\bigg)\E{\Tr(S^{+2})}
\\&\qquad
=\;
\frac2n\left(a-\frac{n-r-2}{n}\right)\E{\Tr^2(S^+)}
\\&\qquad\qquad
+\bigg(a-\frac{n-r-2}{n}\bigg)\bigg(a-\frac{n-r-4}{n}\bigg)\E{\Tr(S^{+2})}
\\&\qquad
\leq\;
\bigg(a-\frac{n-r-2}{n}\bigg)\bigg(a-\frac{n-r-6}{n}\bigg)\E{\Tr(S^{+2})},
\end{align*}}
which shows inequality \ref{eq:pres-aS-risk}.
This upper bound has a minimum at $a=\frac{n-r-4}{n}$, which yields
{\setlength{\mathindent}{5pt}\begin{align*}&
\E{\big\|aS^+-\Sigma^+\big\|_F^2}-\E{\Big\|\frac{n-r-2}nS^+-\Sigma^+\Big\|_F^2}
\leq\;
-\frac4{n^2}\E{\Tr(S^{+2})}<\;0.
\end{align*}}
Thus $\frac{n-r-4}{n}S^+$ dominates $\frac{n-r-2}{n}S^+$, as desired.
Moreover, the URE of $S^+$ is $\frac2n\Tr^2(S^+)-\frac{n-2r-6}{n}\Tr(S^{+2})$ and so
{\setlength{\mathindent}{5pt}\begin{align*}&
\E{\|\frac{n-r-2}nS^+-\Sigma^+\|_F^2}-\E{\Big\|S^+-\Sigma^+\Big\|_F^2}
\\&\qquad
=\;
\E{-2\frac{r+2}{n^2}\Tr^2(S^+)
-\frac{(r+2)(r+4)}{n^2}
\Tr(S^{+2})}
\qquad\leq\;0,
\end{align*}}
so $\frac{n-r-2}nS^+$ dominates $S^+$, as claimed.
\end{proof}

\begin{proof}[Proof of Proposition \ref{prop:pres-aS-tTr}]
We have $\psi_k=a[1/l_k+t\Tr^{-1}(S)]$, so
{\setlength{\mathindent}{5pt}\begin{align*}&
\sum_{k=1}^r\psi_k^2
=\frac{(n-r-4)^2}{n^2}\sum_{k=1}^r\left[\frac1{l_k^2}+\frac{2t}{l_k\Tr(S)}+\frac{t^2}{\Tr(S)}\right]
\\&\hspace{15pt}
=\frac{(n-r-4)^2}{n^2}\Tr(S^{+2})
\!+2\frac{(n-r-4)^2}{n^2}t\frac{\Tr(S^+)}{\Tr(S)}
\!+\frac{(n-r-4)^2r}{n^2}t^2\frac1{\Tr^2(S)}
\\&
-2\frac{n-r-2}n\sum_{k=1}^r\frac{\psi_k}{l_k}
=-2\frac{(n-r-2)(n-r-4)}{n^2}\sum_{k=1}^r\left[\frac1{l_k^2}+\frac{t}{l_k\Tr(S)}\right]
\\&\hspace{15pt}
=-2\frac{(n-r-2)(n-r-4)}{n^2}\,\Tr(S^{+2})
-2\frac{(n-r-2)(n-r-4)}{n^2}t\frac{\Tr(S^+)}{\Tr(S)}
\\&
-\frac4n\sum_{k=1}^r\frac{\partial\psi_k}{\partial l_k}
=-4\frac{n-r-4}{n^2}\sum_{k=1}^r\left[-\frac1{l^2_k}-\frac{t}{\Tr^2(S)}\right]
\\&\hspace{15pt}
=4\frac{n-r-4}{n^2}\,\Tr(S^{+2})+4\frac{(n-r-4)r}{n^2}t\frac1{\Tr^2(S)}
\\&
-\frac2n\sum_{k\not=b}^r\frac{\psi_k-\psi_b}{l_k-l_b}
=-2\frac{n-r-4}{n^2}\sum_{k\not=b}^r\frac{l_k^{-1}-l_b^{-1}}{l_k-l_b}
\\&\hspace{15pt}
=2\frac{n-r-4}{n^2}\Tr^2(S^+)-2\frac{n-r-4}{n^2}\Tr(S^{+2}).
\end{align*}}
Summing everything, we get the URE
{\setlength{\mathindent}{5pt}\begin{align*}&
U
=
2\frac{n-r-4}{n^2}\Tr^2(S^+)-\frac{(n-r-4)(n-r-2)}{n^2}\Tr(S^{+2})
\\&\qquad
+4\frac{n-r-4}{n^2}\bigg[
r\frac1{\Tr^2(S)}
-\frac{\Tr(S^+)}{\Tr(S)}
\bigg]t
+\frac{(n-r-4)^2r}{n^2}t^2\frac1{\Tr^2(S)}.
\end{align*}}
Now note, using $\Tr^{-1}(S)\leq\Tr(S^+)/r^2$ and $\Tr(S^{+2})\leq\Tr^2(S^+)$ that
{\setlength{\mathindent}{5pt}\begin{align*}&
\E{\bigg|\frac{n-r-2}{n}\sum_{k=1}^p\frac{\psi_k}{l_k}
+\frac2n\sum_{k=1}^r\frac{\partial\psi_k}{\partial l_k}
+\frac1n\sum_{k\not=b}\frac{\psi_k-\psi_b}{l_k-l_b}\bigg|}
\\&\qquad
=\frac{(n-r-3)(n-r-4)}{n^2}
\E{\Tr(S^{+2})}
+\frac{n-r-4}{n^2}\E{\Tr^2(S^+)}
\\&\hspace{40pt}
+\frac{(n-r-2)(n-r-4)}{n^2}t\E{\frac{\Tr(S^+)}{\Tr(S)}}
-2\frac{(n-r-4)r}{n^2}t\E{\frac1{\Tr^2(S)}}
\\&\qquad
\leq\Big(\frac{(n-r-1)(n-r-4)}{n^2}
+\frac{(n-r-2)(n-r-4)}{r^2n^2}|t|
\\&\hspace{140pt}
+2\frac{n-r-4}{r^3n^2}|t|\Big)
\E{\Tr^2(S^+)}
\qquad<\infty,
\end{align*}}
since $\E{\Tr^2(S^+)}<\infty$ by equation \eqref{eq:pres-aS-inf}.
Therefore, we can apply Theorem \ref{thm:pres-ure} to obtain
{\setlength{\mathindent}{5pt}\begin{align*}&
\E{\Big\|\hat\Sigma^+_t-\Sigma^+\Big\|_F^2}=\E{U}
\\&\qquad
=\;2\frac{n-r-4}{n^2}\E{\Tr^2(S^+)}
-\frac{(n-r-4)(n-r-2)}{n^2}\E{\Tr(S^{+2})}
\\&\hspace{40pt}
+4\frac{n-r-4}{n^2}t\E{
r\frac1{\Tr^2(S)}
-\frac{\Tr(S^+)}{\Tr(S)}
}
+\frac{(n-r-4)^2r}{n^2}t^2\E{\frac1{\Tr^2(S)}}
\end{align*}}
for all $t\in\R$. Using that $n-r-4>0$ and $r^2\Tr^{-1}(S)\leq\Tr(S^+)$ again, we can bound the difference in risk as
{\setlength{\mathindent}{5pt}\begin{align*}&
\E{\Big\|\hat\Sigma^+_t-\Sigma^+\Big\|_F^2}-\E{\Big\|\hat\Sigma^+_\text{EM1}-\Sigma^+\Big\|_F^2}
\\&\qquad
\leq\;
\frac{(n-r-4)r}{n^2}\bigg[
(n-r-4)t^2
-4(r-1)t
\bigg]
\E{\frac1{\Tr^{2}(S)}}
\end{align*}}
which proves inequality \eqref{eq:pres-aS-tTr-risk}.
There is a minimum in $t$ since $n-r-4>0$, which is $t=2\frac{r-1}{n-r-4}$. In this case the quadratic coefficient and thus the difference in risk is strictly negative, so the corresponding estimator $\hat\Sigma_\text{EM2} =\frac{n-r-4}n\left[S^++2\frac{r-1}{n-r-4}\,\Tr^{-1}(S)\right]$ dominates $\hat\Sigma^+_\text{EM1}$, as desired.
\end{proof}

\subsection{Proofs of Subsection \ref{subsec:disc}}

\begin{proof}[Proof of Theorem \ref{thm:disc-ure}]
Since $T$ and $S$ share the same non-zero eigenvalues, we can regard $\Psi$ as a function of $T\sim\text{W}_r(n-1,\Omega/n)$ only.
Moreover, $\bar X = \mu+H\bar Y$. Using that $O_1O_1^t=HH^t$ almost surely, we find
{\setlength{\mathindent}{10pt}\begin{align*}
&\E{\Big\|\hat{\Sigma}^+\bar X-\Sigma^+\mu\Big\|_2^2}
=
\E{\Big\|O_1O_1^tO_1\Psi O_1^tO_1O_1^t[\mu+H\bar Y] -H\Omega^{-1}H^t\mu\Big\|_2^2}
\\&\qquad
=
\E{\Big\|U\Psi U^t[H^t\mu+\bar Y] -\Omega^{-1}H^t\mu\Big\|_2^2}
\end{align*}}
Define $G=H^t\mu+\bar Y\sim\text{N}_r(H^t\mu,\Omega/n)$ and notice it is independent of $U\Psi U^t$ since $\bar X$ and $S$ are independent. Then
{\setlength{\mathindent}{10pt}\begin{align*}
&\qquad
=\E{\Big\|U\Psi U^tW -\Omega^{-1}H^t\mu\Big\|_2^2}
\\&\qquad
=2\E{(G-H^t\mu)^t\Omega^{-1}U\Psi U^tG}-2\E{\tr{\Omega^{-1}U\Psi U^tGG^t}}
\\&\qquad\qquad
+\E{G^tU\Psi^2U^tG}-\E{(G-H^t\mu)^t\Omega^{-2}(G+H^t\mu)}.
\end{align*}}
The first term can be handled as follows. By independence of $G$ and $U\Psi U^t$, and Stein's lemma \citep[Lemma A.1]{FourdrinierStrawderman03}, we get
{\setlength{\mathindent}{10pt}\begin{align*}&
2\E{(G-H^t\mu)^t\Omega^{-1}U\Psi U^tG}
=\frac2n\text{E}_G\left[(G-H^t\mu)^t\left[\frac{\Omega}n\right]^{-1}\text{E}_T\left[U\Psi U^t\right]G\right]
\\&\qquad
=\frac2n\text{E}_G\left[\nabla_GG'\text{E}_T\left[U\Psi U^t\right]\right]
=\frac2n\text{Etr}\left[\Psi\right]
\end{align*}}
under the condition
{\setlength{\mathindent}{10pt}\begin{align*}&
\text{E}_G\left[\Big|\nabla_GG'\text{E}_T\left[U\Psi U^t\right]\Big|\right]
=\text{E}_G\left[\Big|\tr{\Psi}\Big|\right]
=\E{\left|\sum_{k=1}^r\psi_k\right|}<\infty.
\end{align*}}
For the second term, we will make use of the fact that
\begin{align}
\text{E}_T\left[\Omega^{-1}U\Psi U^t\right]=\text{E}_T\left[U\Psi^* U^t\right],
\label{eq:thm:disc-ure-haff}
\end{align}
where $\Psi^*$ is defined as the statement. This is the result of a  non-singular analogue of Theorem 2.2 from \citet{Konno09}, or alternatively of a matrix analogue of Lemma 3 from \citet{ChetelatWells14}. By appropriate modifications to the latter result and the underlying Lemma 3 from \cite{ChetelatWells12} on which it depends, it can be seen that sufficient conditions for equation \ref{eq:thm:disc-ure-haff} to hold are
\begin{align*}
\text{E}_T\left[\big|U\Psi^* U^t\big|_{ij}\right]<\infty
\hspace{80pt}\forall 1\leq i,j\leq r.
\end{align*}
A sufficient condition for this to happen is
\begin{align*}
\max_{1\leq i,j\leq r}\text{E}_T\left[\big|U\Psi^* U^t\big|_{ij}\right]
&\leq\;
\E{\sum_{k=1}^r\Big|\psi^*_k\Big|}<\infty.
\end{align*}
Then, using the independence of $G$ and $T$, we can conclude
{\setlength{\mathindent}{10pt}\begin{align*}&
-2\E{\tr{\Omega^{-1}U\Psi U^tGG^t}}
=-2\tr{\text{E}_T\left[\Omega^{-1}U\Psi U^t\right]\text{E}_G\left[GG^t\right]}
\\&\qquad
=-2\tr{\text{E}_T\left[U\Psi^* U^t\right]\text{E}_G\left[GG^t\right]}
=-2\E{G^tU\Psi^* U^tG}.
\end{align*}}
Thus
{\setlength{\mathindent}{10pt}\begin{align*}
\E{\Big\|\hat{\Sigma}^+\bar X-\Sigma^+\mu\Big\|_F^2}
&
=\frac2n\E{\tr{\Psi}}
-2\E{G^tU\Psi^* U^tG}
+\E{G^tU\Psi^2U^tG}
\\&\qquad
-\E{(G-H^t\mu)^t\Omega^{-2}(G+H^t\mu)}.
\end{align*}}
But $U^tG=O_1^tH[H^t\mu+\bar Y]=O_1^t\bar X$ and $(G-H^t\mu)^t\Omega^{-2}(G+H^t\mu)=(G-H^t\mu)^tH^t\Sigma^{+2}H(G+H^t\mu)=(\bar X-\mu)^t\Sigma^{+2}(\bar X+\mu)$. Hence
{\setlength{\mathindent}{10pt}\begin{align*}
\E{\Big\|\hat{\Sigma}^+\bar X-\Sigma^+\mu\Big\|_2^2}
&=\E{\frac2n\text{tr}\,\hat{\Sigma}^++\bar X^tO_1(\Psi^2-2\Psi^*)O_1^t\bar X}
\\&\qquad-\E{(\bar X-\mu)^t\Sigma^{+2}(\bar X+\mu)}.
\end{align*}}
This proves the result.
\end{proof}

\begin{proof}[Proof of Proposition \ref{prop:disc-aS}]
We have $\psi_k=a/l_k$, so
{\setlength{\mathindent}{10pt}\begin{align*}
\psi^*_k
&=\frac{n-r-2}n\frac{\psi_k}{l_k}
+\frac2n\frac{\partial\psi_k}{\partial l_k}
+\frac1n\sum_{b\not=k}^r\frac{\psi_k-\psi_b}{l_k-l_b}
\\&
=\frac{n-r-2}n\frac1{l^2_k}a
-\frac2n\frac1{l^2_k}a
-\frac1n\frac{\Tr(S^+)}{l_k}a
+\frac1n\frac1{l^2_k}a
\\&
=\frac{n-r-3}n\frac1{l^2_k}a
-\frac1n\frac{\Tr(S^+)}{l_k}a.
\end{align*}}
We can bound
\begin{align*}
\E{\left|\sum_{k=1}^r\psi_k\right|}
=|a|\E{\Tr(S^+)}
\leq\;|a|\E{\Tr^2(S^+)}^\frac12,
\end{align*}
\begin{align*}
\E{\sum_{k=1}^r\Big|\psi^*_k\Big|}
\leq
\frac{n-r-3}n|a|\E{\Tr(S^{+2})}
+\frac1n|a|\E{\Tr^2(S^+)},
\end{align*}
so by inequality \eqref{eq:pres-aS-inf} and the fact that $n-r-4>0$ these two expressions are finite. Therefore, we can apply the results of Theorem \ref{thm:disc-ure} to obtain
{\setlength{\mathindent}{10pt}\begin{align*}&
\E{\Big\|aS^+\bar X-\Sigma^+\mu\Big\|_2^2}
=\frac2na\E{\Tr(S^+)}
\\&\hspace{40pt}
+\E{\sum_{k=1}^r
\bigg(
\frac{a}{l_k^2}-2\frac{n-r-3}n\frac1{l^2_k}
+\frac2n\frac{\Tr(S^+)}{l_k}
\bigg)a\left(O_1^t\bar X\bar X^tO_1\right)_{kk}}
\\&\hspace{40pt}
-\E{\vphantom{\bigg\vert}(\bar X-\mu)^t\Sigma^{+2}(\bar X+\mu)}
\\&\quad
=\frac2na\E{\Tr(S^+)}
+\bigg(
a^2
-2\frac{n-r-3}na
\bigg)\,\E{\bar X^tS^{+2}\bar X}
\\&\hspace{40pt}
+\frac2na\,\E{\vphantom{\Big\vert}\Tr(S^+)\bar X^tS^+\bar X}
-\E{\vphantom{\Big\vert}(\bar X-\mu)^t\Sigma^{+2}(\bar X+\mu)}
\end{align*}}
for any $a\in\R$. Therefore, for $a\leq\frac{n-r-2}n$ we can bound the difference in risk by
{\setlength{\mathindent}{10pt}\begin{align*}&
\E{\Big\|aS^+\bar X-\Sigma^+\mu\Big\|_2^2}
-\E{\Big\|\frac{n-r-2}{n}S^+\bar X-\Sigma^+\mu\Big\|_2^2}
\\&\quad
=\frac2n\left(a-\frac{n-r-2}n\right)\E{\Tr(S^+)}
\\&\hspace{40pt}
+\bigg(a^2
-2\frac{n-r-3}na
+\frac{(n-r-2)(n-r-4)}{n^2}
\bigg)\,\E{\bar X^tS^{+2}\bar X}
\\&\hspace{40pt}
+\frac2n\left(a-\frac{n-r-2}n\right)
\,\E{\vphantom{\Big\vert}\Tr(S^+)\bar X^tS^+\bar X}
\\&\quad
\leq\;\bigg(a-\frac{n-r-2}{n}\bigg)\bigg(a-\frac{n-r-4}{n}\bigg)\,\E{\bar X^tS^{+2}\bar X},
\end{align*}}
which proves inequality \eqref{eq:disc-aS-risk}. The quadratic coefficient is minimized at $a=\frac{n-r-3}n$, at which point we have
{\setlength{\mathindent}{10pt}\begin{align*}&
\E{\Big\|\frac{n-r-3}{n}S^+\bar X-\Sigma^+\mu\Big\|_2^2}
-\E{\Big\|\frac{n-r-2}{n}S^+\bar X-\Sigma^+\mu\Big\|_2^2}
\\&\hspace{40pt}
\leq\;-\frac1{n^2}\E{\bar X^tS^{+2}\bar X}
\qquad<\;0.
\end{align*}}
Thus $\frac{n-r-3}{n}S^+\bar X$ dominates $\frac{n-r-2}{n}S^+\bar X$, as desired. Moreover,
{\setlength{\mathindent}{5pt}\begin{align*}&
\E{\|\frac{n-r-2}nS^+\bar X-\Sigma^+\bar X\|_2^2}-\E{\Big\|S^+\bar X-\Sigma^+\bar X\Big\|_2^2}
\\&\quad
=-2\frac{r+2}{n^2}\E{\Tr(S^+)}
-\frac{(r+2)(r+4)}{n^2}\,\E{\bar X^tS^{+2}\bar X}
\\&\hspace{40pt}
-2\frac{r+2}{n^2}
\,\E{\vphantom{\Big\vert}\Tr(S^+)\bar X^tS^+\bar X}
\qquad<\;0,
\end{align*}}
so $\frac{n-r-2}nS^+$ dominates $S^+$, as claimed.
\end{proof}

\begin{proof}[Proof of Proposition \ref{prop:disc-aS-tTr}]
We will apply \ref{thm:pres-ure}, and we have here $\psi_k=\frac{n-r-3}n[1/l_k+t\Tr^{-1}(S)]$ for $1\leq k\leq r$, so
{\setlength{\mathindent}{10pt}\begin{align*}
\frac{n-r-2}n\frac{\psi_k}{l_k}
&=\frac{(n-r-2)(n-r-3)}{n^2}\left[\frac1{l_k^2}+\frac{t}{l_k\Tr(S)}\right],
\\
\frac2n\sum_{k=1}^r\frac{\partial\psi_k}{\partial l_k}
&=2\frac{n-r-3}{n^2}\left[-\frac1{l^2_k}-\frac{t}{\Tr^2(S)}\right],
\\
\frac1n\sum_{b\not=k}^r\frac{\psi_k-\psi_b}{l_k-l_b}
&=\frac{n-r-3}{n^2}\sum_{b\not=k}^r\frac{l_k^{-1}-l_b^{-1}}{l_k-l_b}
=\frac{n-r-3}{n^2}\left[\frac1{l_k^2}-\frac{\Tr(S^+)}{l_k}\right].
\end{align*}}
Therefore,
{\setlength{\mathindent}{20pt}\begin{align*}
\psi^*_k
&=\frac{n-r-2}n\frac{\psi_k}{l_k}
+\frac2n\frac{\partial\psi_k}{\partial l_k}
+\frac1n\sum_{b\not=k}^r\frac{\psi_k-\psi_b}{l_k-l_b}
\\&
=\frac{(n-r-3)^2}{n^2}\frac1{l_k^2}
+\frac{(n-r-2)(n-r-3)}{n^2}t\,\frac{\Tr^{-1}(S)}{l_k}
\\&\qquad
-2\frac{n-r-3}{n^2}t\,\Tr^{-2}(S)
-\frac{n-r-3}{n^2}\frac{\Tr(S^+)}{l_k}
\end{align*}}
We can bound
{\setlength{\mathindent}{10pt}\begin{align*}
\E{\left|\sum_{k=1}^r\psi_k\right|}
\leq\;
\frac{n-r-3}n\E{\Tr(S^+)}+\frac{(n-r-3)r}n|t|\E{\Tr^{-1}(S)},
\end{align*}}
{\setlength{\mathindent}{10pt}\begin{align*}
\E{\sum_{k=1}^r\Big|\psi^*_k\Big|}
&\leq\;
\frac{(n-r-3)^2}{n^2}\E{\Tr(S^{+2})}
+\frac{n-r-3}{n^2}\E{\Tr^2(S^+)}
\\&\qquad
+\frac{(n-r)(n-r-3)}{n^2}|t|\,\E{\Tr^{-2}(S)},
\end{align*}}
so by $\Tr^{-1}\leq\Tr(S^+)/r^2$, inequality \eqref{eq:pres-aS-inf} and the fact that $n-r-4>0$ these two expressions are finite. Therefore, we can apply the results of Theorem \ref{thm:disc-ure} to obtain
{\setlength{\mathindent}{10pt}\begin{align*}&
\E{\Big\|\hat{\eta}_t-\eta\Big\|_2^2}
=2\frac{n-r-3}{n^2}\E{\Tr(S^+)}
+2\frac{(n-r-3)r}{n^2}t\E{\Tr^{-1}(S)}
\\&\hspace{20pt}
+\E{\sum_{k=1}^r\bigg(
\frac{(n-r-3)^2}{n^2}\frac1{l_k^2}
+2\frac{(n-r-3)^2}{n^2}t\frac{\Tr^{-1}(S)}{l_k}
\right.\\&\hspace{60pt}\left.
+\frac{(n-r-3)^2}{n^2}t^2\Tr^{-2}(S)
-2\frac{(n-r-3)^2}{n^2}\frac1{l_k^2}
\right.\\&\hspace{60pt}\left.
-2\frac{(n-r-2)(n-r-3)}{n^2}t\,\frac{\Tr^{-1}(S)}{l_k}
+4\frac{n-r-3}{n^2}t\,\Tr^{-2}(S)
\right.\\&\hspace{60pt}\left.
+2\frac{n-r-3}{n^2}\frac{\Tr(S^+)}{l_k}
\bigg)\left(O_1^t\bar X\bar X^tO_1\right)_{kk}}
\\&\hspace{20pt}
-\E{\vphantom{\bigg\vert}(\bar X-\mu)^t\Sigma^{+2}(\bar X+\mu)}
\\&\quad
=2\frac{n-r-3}{n^2}\E{\Tr(S^+)}
+2\frac{(n-r-3)r}{n^2}t\E{\Tr^{-1}(S)}
\\&\hspace{20pt}
+\E{\sum_{k=1}^r
\bigg(-\frac{(n-r-3)^2}{n^2}\frac1{l_k^2}
+2\frac{n-r-3}{n^2}\frac{\Tr(S^+)}{l_k}
\right.\\&\hspace{60pt}\left.
-2\frac{n-r-3}{n^2}t\frac{\Tr^{-1}(S)}{l_k}
+4\frac{n-r-3}{n^2}t\,\Tr^{-2}(S)
\right.\\&\hspace{60pt}\left.
+\frac{(n-r-3)^2}{n^2}t^2\Tr^{-2}(S)
\bigg)\left(O_1^t\bar X\bar X^tO_1\right)_{kk}}
\\&\hspace{20pt}
-\E{\vphantom{\bigg\vert}(\bar X-\mu)^t\Sigma^{+2}(\bar X+\mu)}
\\&\quad
=2\frac{n-r-3}{n^2}\E{\Tr(S^+)}
-\frac{(n-r-3)^2}{n^2}\E{\bar X^tS^{+2}\bar X}
\\&\hspace{20pt}
+2\frac{n-r-3}{n^2}\E{\Tr(S^+)\bar X^tS^+\bar X}
+\bigg(2\frac{(n-r-3)r}{n^2}\E{\Tr^{-1}(S)}
\\&\hspace{20pt}
-2\frac{n-r-3}{n^2}\E{\frac{\bar X^tS^+\bar X}{\Tr(S)}}
+4\frac{n-r-3}{n^2}\E{\frac{\bar X^t\bar X}{\Tr^2(S)}}\bigg)t
\\&\hspace{20pt}
+\frac{(n-r-3)^2}{n^2}t^2\E{\frac{\bar X^t\bar X}{\Tr^2(S)}}
-\E{\vphantom{\bigg\vert}(\bar X-\mu)^t\Sigma^{+2}(\bar X+\mu)}
\end{align*}}
for any $t\in\R$. Therefore, the difference in risk can be written
{\setlength{\mathindent}{10pt}\begin{align*}&
\E{\Big\|\hat{\eta}_t-\eta\Big\|_2^2}
-\E{\Big\|\frac{n-r-3}nS^+\bar X-\eta\Big\|_2^2}
\\&\quad
=\bigg(2\frac{(n-r-3)r}{n^2}\E{\Tr^{-1}(S)}
-2\frac{n-r-3}{n^2}\E{\frac{\bar X^tS^+\bar X}{\Tr(S)}}
\\&\hspace{40pt}
+4\frac{n-r-3}{n^2}\E{\frac{\bar X^t\bar X}{\Tr^2(S)}}\bigg)t
+\frac{(n-r-3)^2}{n^2}t^2\E{\frac{\bar X^t\bar X}{\Tr^2(S)}}.
\end{align*}}
But $\Tr(\bar X\bar X^t)=\Tr(SS^+\bar X\bar X^t)\leq\Tr^\frac12(S^2)\Tr^\frac12([S^+\bar X\bar X^t]^2)\leq\Tr(S)\Tr(S^+\bar X\bar X^t)$, so we can bound
{\setlength{\mathindent}{10pt}\begin{align*}
&\quad
\leq\; 2\frac{(n-r-3)r}{n^2}t\E{\Tr^{-1}(S)}
+2\frac{n-r-3}{n^2}t\E{\frac{\bar X^t\bar X}{\Tr^2(S)}}
\\&\qquad
+\frac{(n-r-3)^2}{n^2}t^2\E{\frac{\bar X^t\bar X}{\Tr^2(S)}}.
\end{align*}}
Next, write the reduced singular value decomposition of $X$ as $\sqrt{n}V_1L^{1/2}O_1$ with $V_1$ $n\times r$ semi-orthogonal, $V^t_1V_1=I_r$. Then
\begin{align*}
\bar X^t\bar X
&=\Tr\Big(X^t\frac{1_n1^t_n}{n^2}X\Big)
=\Tr\Big(LV_1^t\frac{1_n1^t_n}{n}V_1\Big)
\\
&\leq\Tr(L)\sigma_\text{max}\bigg(V_1^t\frac{1_n1^t_n}{n}V_1\bigg)
\leq\Tr(S)\sigma_\text{max}\bigg(\frac{1_n1^t_n}{n}\bigg)
=\Tr(S).
\end{align*}
Therefore, we can bound by
{\setlength{\mathindent}{10pt}\begin{align*}
&\quad
\leq\; \frac{(n-r-3)}{n^2}\bigg[2(r+1)t
+(n-r-3)t^2\bigg]\E{\frac1{\Tr(S)}},
\end{align*}}
which proves \eqref{eq:disc-aS-tTr-risk}. Since $n-r-3>0$, the quadratic coefficient has a minimum, at $t=-\frac{r+1}{n-r-3}$. In this case we have
{\setlength{\mathindent}{10pt}\begin{align*}&
\E{\Big\|\frac{n-r-3}{n}\!\left[S^+\!\!-\!\!\frac{(r+1)\Tr^{-1}(S)}{n-r-3}\right]\!\!\bar X-\eta\Big\|_2^2}
-\E{\Big\|\frac{n-r-3}nS^+\bar X-\eta\Big\|_2^2}
\\&\quad
\leq -\frac{(r+1)^2}{n^2}\E{\frac1{\Tr(S)}}\qquad<\;0.
\end{align*}}
Thus $\hat\eta_\text{TK2}=\frac{n-r-3}{n}\left[S^+-\frac{r+1}{n-r-3}\Tr^{-1}(S)\right]$ dominates $\hat\eta_\text{TK1}$, as desired.
\end{proof}

\bibliographystyle{elsarticle-num-author}
\bibliography{covure}

\end{document}